\documentclass[oneside,reqno,english]{amsart}
\usepackage[T1]{fontenc}
\usepackage[utf8]{inputenc}
\usepackage{xcolor}
\usepackage{babel}
\usepackage{prettyref}
\usepackage{amstext}
\usepackage{amsthm}
\usepackage{amssymb}
\usepackage[pdfusetitle,
 bookmarks=true,bookmarksnumbered=false,bookmarksopen=false,
 breaklinks=false,pdfborder={0 0 0},pdfborderstyle={},backref=false,colorlinks=false]
 {hyperref}
\hypersetup{
 colorlinks=true,citecolor=blue,linkcolor=blue,linktocpage=true}

\makeatletter
\numberwithin{equation}{section}
\numberwithin{figure}{section}

\usepackage{prettyref}

\newrefformat{cor}{Corollary~\ref{#1}}
\newrefformat{subsec}{Section~\ref{#1}}
\newrefformat{lem}{Lemma~\ref{#1}}
\newrefformat{thm}{Theorem~\ref{#1}}
\newrefformat{sec}{Section~\ref{#1}}
\newrefformat{chap}{Chapter~\ref{#1}}
\newrefformat{prop}{Proposition~\ref{#1}}
\newrefformat{exa}{Example~\ref{#1}}
\newrefformat{tab}{Table~\ref{#1}}
\newrefformat{rem}{Remark~\ref{#1}}
\newrefformat{def}{Definition~\ref{#1}}
\newrefformat{fig}{Figure~\ref{#1}}
\newrefformat{claim}{Claim~\ref{#1}}
\newrefformat{assu}{Assumption~\ref{#1}}

\makeatother

\theoremstyle{plain}
\newtheorem{thm}{\protect\theoremname}[section]
\newtheorem{lem}[thm]{\protect\lemmaname}
\newtheorem{prop}[thm]{\protect\propositionname}
\theoremstyle{remark}
\newtheorem{rem}[thm]{\protect\remarkname}
\theoremstyle{definition}
\newtheorem{defn}[thm]{\protect\definitionname}
\theoremstyle{plain}
\newtheorem{cor}[thm]{\protect\corollaryname}
\providecommand{\corollaryname}{Corollary}
\providecommand{\definitionname}{Definition}
\providecommand{\lemmaname}{Lemma}
\providecommand{\propositionname}{Proposition}
\providecommand{\remarkname}{Remark}
\providecommand{\theoremname}{Theorem}

\begin{document}
\subjclass[2020]{Primary: 81P15; secondary: 28B05, 46L53, 47A20, 47B65.}
\title[Tree Coordinates and Range Martingales for POVMs]{Tree Coordinates and Range Martingales for Positive Operator-Valued
Measures}
\begin{abstract}
Positive operator-valued measures on a tree admit intrinsic local
coordinates coming from the way each cylinder value splits into its
children. We show that these local splittings, taken on the range
spaces of the cylinder values, recover the measure and at the same
time build an intrinsic direct limit dilation whose cylinder projections
yield the minimal Naimark dilation. In these coordinates, the commutant
of the dilation becomes a martingale calculus on the range spaces.
This gives local descriptions of extremality and domination, and it
also yields a bounded change-of-measure transform that updates the
tree coordinates in a natural way. For self-adjoint range martingales
we obtain a quadratic variation formula from the range space isometries,
and the associated local variance terms detect the projection-valued
case.
\end{abstract}

\author{James Tian}
\address{Mathematical Reviews, 535 W. William St, Suite 210, Ann Arbor, MI
48103, USA}
\email{james.ftian@gmail.com}
\keywords{POVM, tree coordinates, range martingale, PVM, dilation, extremality,
domination, Doob transform, quadratic variation}

\maketitle
\tableofcontents{}

\section{Introduction}\label{sec:1}

Positive operator-valued measures (POVMs) are a basic object in operator
theory and in quantum information, where they model generalized measurements.
When the underlying sample space has a tree structure, one expects
a local description in terms of how the measure splits from a parent
cylinder to its children. For scalar measures this is elementary.
Each parent mass is distributed among the children by conditional
probabilities. For operator-valued measures there is no comparable
scalar coordinate system on the original Hilbert space, because the
relevant subspace changes from one cylinder to the next. The starting
point of this paper is that the right local coordinates live on the
range spaces of the cylinder values.

If one passes from a cylinder value to the closure of the range of
its square root, then each local splitting is encoded by a positive
contraction on the current range space. These splitting operators
recover the measure recursively along the tree, and they reduce to
the familiar conditional weights in the scalar and commuting cases
(\prettyref{prop:2-2}, \prettyref{rem:2-3}). This gives a coordinate
system for tree indexed POVMs. The projection-valued case also becomes
local in these coordinates, since this is equivalent to asking that
the splitting operators be projections on the current range spaces
(\prettyref{prop:2-4}).

This range space description leads to a canonical dilation. From the
local splittings one obtains edge contractions between the range spaces
at adjacent levels, and these contractions assemble into a direct
limit Hilbert space. The resulting cylinder projections recover the
original measure by compression, and this direct limit construction
gives the minimal Naimark dilation (Theorems \ref{thm:3-3}, \ref{thm:3-4}).
In the usual presentation, a dilation is often taken as external data
from which one studies the measure. Here the direction is reversed.
The measure itself carries enough local structure to build its minimal
dilation.

Once the dilation is written in these coordinates, its commutant admits
a tree martingale description. The relevant objects are bounded self-adjoint
operators on the range spaces that satisfy a local averaging relation
through the edge contractions. We call these range martingales. They
turn the commutant of the dilation into an  object attached to the
measure (\prettyref{thm:4-4}). This has two immediate consequences.
Extremality becomes a statement that there are no nonzero range martingales
vanishing at the root (\prettyref{thm:4-5}), and domination by the
given measure is described by positive range martingales with prescribed
bounds and root value (\prettyref{thm:5-2}, \prettyref{cor:5-3}).
In this way, two standard themes in the theory of POVMs, convex structure
and Radon-Nikodym type domination, acquire a local description on
the tree.

The same coordinates also support a bounded change-of-measure calculus.
A strictly positive normalized range martingale produces a new measure
that is boundedly equivalent to the original one, and the local splitting
operators transform by a conjugation formula along the tree (\prettyref{thm:6-3},
\prettyref{prop:6-5}). In the scalar case this is the usual Doob
transform. In the operator-valued setting it gives a way to compare
nearby measurements while staying within the same  coordinate system.
This is natural both in operator-valued measure theory and in quantum
information, where one often studies how a measurement changes under
reweighting, post-processing, or successive conditioning along finite
outcome strings.

The final part of the paper shows that the martingale picture also
carries a square-function calculus. Each bounded self-adjoint range
martingale has a local variance term at every vertex, and these local
terms add up to an increasing quadratic variation along the tree (\prettyref{lem:7-2},
\prettyref{thm:7-3}). When this construction is applied to the child
coordinate projections, the resulting local variance is the difference
between a splitting operator and its square. The corresponding sum
measures how far the local splitting is from the projection-valued
case, and vanishes if and only if the measure is projection-valued
(\prettyref{prop:7-5}). Thus the same local coordinates that encode
the measure, build the dilation, and describe domination also provide
a local notion of variance that detects sharpness.

\textbf{Literature context.} This paper sits near several parts of
operator theory and mathematical quantum theory. The dilation-theoretic
background comes from Stinespring's theorem and Arveson's work on
subalgebras of $C^{*}$-algebras, completely positive maps, and noncommutative
Choquet theory \cite{MR253059,MR2425180,MR2641960,MR2823981,MR2743416}.
Related pure operator-theoretic developments include injective envelopes
and operator spaces, boundary representations, and the Choquet boundary
of operator systems \cite{MR566081,MR1869114,MR2132691,MR3430455,MR4403230,MR5009783}.
The present construction uses this dilation background, but the dilation
is not taken as external data. It is built from the range spaces of
the cylinder values.

There is also a substantial literature on operator-valued measures
and POVMs as mathematical objects. This includes work on clean measurements,
randomness and conditional expectation for quantum random variables,
commutative POVMs, Feller Markov kernels, Lyapunov theorems, operator-valued
frames, and dilation theory for operator-valued measures \cite{MR2165832,MR2321233,MR2907638,MR2953266,MR2548409,MR2548410,MR3498318,MR3857513,MR4137283,MR4280112,MR3778680,MR4162409,MR4735867}.
These papers give several ways of studying POVMs through order, dilation,
randomness, Markov kernels, and frame theoretic structure. The tree
setting considered here leads to a different local description, in
which each cylinder value is replaced by its square-root range space
and each split into child cylinders is represented by a positive decomposition
of the identity on that range space.

The convex structure of POVMs has been studied from several directions,
including extreme observables, $C^{*}$-extreme points, covariant
measurements, relabeling and mixing, and extreme marginals of completely
positive maps \cite{MR1810813,MR2105199,MR2262678,MR2432912,MR2770378,MR2817979,MR2930514,MR2990784,MR3133476,MR3190205,MR3339203,MR4112227,MR4340930}.
The extremality result in this paper uses the standard commutant criterion,
but expresses it in the range coordinates as the absence of nonzero
bounded range martingales with zero root value. In the same language,
dominated positive operator-valued measures become positive range
martingales.

The Radon-Nikodym theorem used in the paper is also connected with
earlier work on completely positive maps, quantum operations, and
quantum instruments \cite{MR932932,MR1608473,MR2014842}. What is
added here is the local tree form of that theory. A dominated measure
is encoded by a positive range martingale, and a boundedly equivalent
change of measure updates the edge contractions by an explicit noncommutative
Doob transform. This is the operator-valued analogue of changing transition
probabilities on a filtered tree by a positive scalar martingale.

The paper also touches the broader mathematical literature on quantum
measurements, compatibility, incompatibility, and repeated or nondisturbing
measurements \cite{MR2742801,MR3126873,MR3417980,MR3787334,MR4042565,MR4240433}.
These questions depend on how one compares, refines, or modifies a
measurement. The range space coordinates developed here give a local
operator-theoretic way to carry out such comparisons on a tree: domination
becomes positivity of a range martingale, bounded change of measure
becomes an explicit update of the edge contractions, and the quadratic
variation terms measure the local departure from the projection-valued
case.

\section{Tree coordinates}\label{sec:2}

This section introduces the local coordinates used throughout the
paper. For a tree indexed POVM, each child cylinder value is dominated
by the value at its parent, so one can describe the passage from parent
to child by a positive contraction on the parent range space. These
operators will serve as the local tree coordinates. They recover the
cylinder values recursively, reduce to the usual conditional weights
in the scalar case, and give a local description of the projection-valued
case. 

This construction is related to two standard ways of treating POVMs.
One is to regard a POVM as an operator-valued measure and study its
order, range, and dilation properties \cite{MR4137283,MR4280112,MR3778680}.
Another is to compare POVMs with classical post-processings and Markov
kernels in the commutative case \cite{MR2548409,MR2548410,MR3498318}.
The local coordinates below keep the parent-child conditioning structure
of the scalar case, but place the local data on the range spaces. 

We begin with the binary tree, where the notation is simplest, and
then pass to the finite-alphabet version. Let 
\begin{equation}
\Omega=\left\{ 0,1\right\} ^{\mathbb{N}}\label{eq:2-1}
\end{equation}
with its product Borel $\sigma$-algebra $\mathcal{B}\left(\Omega\right)$.
If $w=w_{1}\cdots w_{n}$ is a finite word in the alphabet $\left\{ 0,1\right\} $,
let 
\begin{equation}
\left[w\right]=\left\{ x\in\Omega:x_{1}=w_{1},\ldots,x_{n}=w_{n}\right\} \label{eq:2-2}
\end{equation}
be the corresponding cylinder set. The empty word is denoted by $\varnothing$,
so that $\left[\varnothing\right]=\Omega$.

Let $H$ be a Hilbert space. A POVM on $\Omega$ is a countably additive
map 
\begin{equation}
E:\mathcal{B}\left(\Omega\right)\to B\left(H\right)_{+}\label{eq:2-3}
\end{equation}
such that $E\left(\Omega\right)=I$. Countable additivity is understood
in the weak operator topology. By definition, $E_{\varnothing}=E\left(\Omega\right)=I$,
and $E\left(\emptyset\right)=0$.

For each finite word $w$, write 
\begin{equation}
E_{w}=E\left(\left[w\right]\right).\label{eq:2-4}
\end{equation}
Then, for every $w$, 
\begin{equation}
E_{w}=E_{w0}+E_{w1}.\label{eq:2-5}
\end{equation}

We shall use the following form of the Radon-Nikodym lemma for positive
operators.
\begin{lem}
\label{lem:2-1} Let $0\leq F\leq G$ in $B\left(H\right)$, and set
\begin{equation}
H_{G}=\overline{ran}\,(G^{1/2}).\label{eq:2-6}
\end{equation}
Then there is a unique positive contraction $A$ on $H_{G}$ such
that 
\begin{equation}
F=G^{1/2}AG^{1/2},\label{eq:2-7}
\end{equation}
where $A$ is extended by zero on $H^{\perp}_{G}$. 
\end{lem}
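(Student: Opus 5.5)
This is the Douglas factorization lemma, and the plan is to construct $A$ through a contraction defined on the range of $G^{1/2}$.

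From $0\le F\le G$ we have $\|F^{1/2}x\|^{2}=\langle Fx,x\rangle\le\langle Gx,x\rangle=\|G^{1/2}x\|^{2}$ for all $x\in H$. Define $C_{0}:\operatorname{ran}(G^{1/2})\to H$ by
\[
C_{0}\,G^{1/2}x=F^{1/2}x .
\]
The inequality shows that $C_{0}$ is well defined: if $G^{1/2}x=G^{1/2}y$, then $F^{1/2}(x-y)=0$. It also shows that $C_{0}$ is contractive. Extend $C_{0}$ by continuity to a contraction $C:H_{G}\to H$, and set
\[
A=C^{*}C\in B(H_{G}),
\]
which is a positive contraction on $H_{G}$. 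Extend $A$ by zero on $H_{G}^{\perp}=\ker G^{1/2}$. Since $G^{1/2}$ maps $H$ into $H_{G}$, for $x,y\in H$ we get
\[
\langle G^{1/2}AG^{1/2}x,y\rangle=\langle CG^{1/2}x,CG^{1/2}y\rangle=\langle F^{1/2}x,F^{1/2}y\rangle=\langle Fx,y\rangle .
\]
This gives $F=G^{1/2}AG^{1/2}$. The extension by zero is consistent because $G^{1/2}$ is self-adjoint, so $G^{1/2}$ annihilates $H_{G}^{\perp}$ and $G^{1/2}H\subseteq H_{G}$.

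For uniqueness, suppose $A_{1},A_{2}$ are positive contractions on $H_{G}$ with $G^{1/2}A_{1}G^{1/2}=G^{1/2}A_{2}G^{1/2}$. Put $D=A_{1}-A_{2}\in B(H_{G})$, which is self-adjoint. For all $x,y\in H$,
\[
\langle DG^{1/2}x,G^{1/2}y\rangle=0 .
\]
The vectors $G^{1/2}x$ are dense in $H_{G}$, so the bounded sesquilinear form $(u,v)\mapsto\langle Du,v\rangle$ vanishes on a dense subset of $H_{G}\times H_{G}$. By continuity it vanishes identically, hence $D=0$.

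The only delicate point is keeping $A$ on $H_{G}$ rather than on $H$. Uniqueness fails on all of $H$, since any operator supported on $\ker G^{1/2}$ could be added. Restricting to $H_{G}$, where $\operatorname{ran}G^{1/2}$ is dense, is exactly what makes the density argument work. I expect no real obstacle beyond checking that the extension by continuity and the extension by zero are compatible.
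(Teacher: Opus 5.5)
Your proof is correct and takes essentially the paper's route. You build a Douglas-type contraction $C$ with $CG^{1/2}=F^{1/2}$, set $A=C^{*}C$, and get uniqueness from the density of $ran\,(G^{1/2})$ in $H_{G}$; the only difference is that you construct $C$ directly on $H_{G}$, keeping the argument self-contained, whereas the paper cites Douglas factorization for a contraction $C\in B(H)$ and compresses $C^{*}C$ by the projection $P_{G}$.
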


\begin{proof}
By Douglas factorization \cite{MR203464}, there is a contraction
$C\in B\left(H\right)$ such that $F^{1/2}=CG^{1/2}$. Hence 
\[
F=G^{1/2}C^{*}CG^{1/2}.
\]
Let $P_{G}$ be the orthogonal projection onto $H_{G}$, and define
\[
A=\left.P_{G}C^{*}CP_{G}\right|_{H_{G}}.
\]
Then $0\leq A\leq I_{H_{G}}$, and since $G^{1/2}=P_{G}G^{1/2}$,
we have \prettyref{eq:2-7}.

If $A_{1}$ and $A_{2}$ are positive contractions on $H_{G}$ satisfying
\prettyref{eq:2-7}, then 
\[
\left\langle G^{1/2}x,\left(A_{1}-A_{2}\right)G^{1/2}y\right\rangle =0
\]
for all $x,y\in H$. Since $ran\,(G^{1/2})$ is dense in $H_{G}$,
it follows that $A_{1}=A_{2}$. 
\end{proof}

\begin{prop}
\label{prop:2-2} Let $E$ be a POVM on $\Omega$, and let $E_{w}$
be defined by \prettyref{eq:2-4}. Then, for every finite word $w$,
there is a unique positive contraction $A_{w}$ on 
\[
H_{w}=\overline{ran}\,(E^{1/2}_{w})
\]
such that 
\begin{equation}
E_{w0}=E^{1/2}_{w}A_{w}E^{1/2}_{w},\label{eq:2-8}
\end{equation}
and 
\begin{equation}
E_{w1}=E^{1/2}_{w}\left(I_{H_{w}}-A_{w}\right)E^{1/2}_{w}.\label{eq:2-9}
\end{equation}

Conversely, suppose that $E_{\varnothing}=I$, and that the cylinder
values $E_{w}$ are obtained recursively from positive contractions
$A_{w}$ by \prettyref{eq:2-8} and \prettyref{eq:2-9}. Then there
is a unique POVM, $\mathcal{B}\left(\Omega\right)\xrightarrow{\;E\;}B\left(H\right)_{+}$
with these cylinder values. 
\end{prop}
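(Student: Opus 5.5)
The forward direction follows from \prettyref{lem:2-1}. Since $E_{w}=E_{w0}+E_{w1}$ and both children are positive, we have $0\leq E_{w0}\leq E_{w}$. Apply \prettyref{lem:2-1} with $F=E_{w0}$ and $G=E_{w}$. This gives a unique positive contraction $A_{w}$ on $H_{w}$ satisfying \prettyref{eq:2-8}. For \prettyref{eq:2-9}, let $P_{w}$ be the projection onto $H_{w}$. Then $E_{w}^{1/2}=E_{w}^{1/2}P_{w}$, so $E_{w}=E_{w}^{1/2}I_{H_{w}}E_{w}^{1/2}$ with $I_{H_{w}}$ extended by zero. Subtracting \prettyref{eq:2-8} gives
\[
E_{w1}=E_{w}-E_{w0}=E_{w}^{1/2}\left(I_{H_{w}}-A_{w}\right)E_{w}^{1/2}.
\]
The operator $I_{H_{w}}-A_{w}$ is again a positive contraction on $H_{w}$. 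Uniqueness of $A_{w}$ follows from the uniqueness in \prettyref{lem:2-1}, since \prettyref{eq:2-8} alone determines $A_{w}$.

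For the converse, start from $E_{\varnothing}=I$ and define $E_{w0},E_{w1}$ recursively by \prettyref{eq:2-8}--\prettyref{eq:2-9}. Here $A_{w}$ is a positive contraction on $H_{w}=\overline{ran}\,(E_{w}^{1/2})$, and $E_{w}$ has already been constructed. By construction, every $E_{w}$ is positive, and $E_{w0}+E_{w1}=E_{w}^{1/2}P_{w}E_{w}^{1/2}=E_{w}$. By induction on length, $\sum_{|w|=n}E_{w}=I$ for every $n$. So we have a finitely additive, positive, normalized assignment on the cylinder sets. Every finite disjoint union of cylinders can be refined to a union of cylinders of a common length, so this extends uniquely to a finitely additive $B(H)_{+}$-valued map $E_{0}$ on the algebra $\mathcal{A}$ of clopen sets, with $E_{0}(\Omega)=I$.

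The main step is passing from $\mathcal{A}$ to the Borel $\sigma$-algebra. I would do this one vector at a time. For each $x\in H$, the set function $\mu_{x}(S)=\langle x,E_{0}(S)x\rangle$ is a finitely additive positive measure on $\mathcal{A}$ with total mass $\|x\|^{2}$. Since $\Omega$ is compact and every set in $\mathcal{A}$ is clopen, hence compact, a countable disjoint union of clopen sets that is itself clopen has only finitely many nonempty terms. So $\mu_{x}$ is automatically countably additive on $\mathcal{A}$. Carathéodory's theorem then gives a unique Borel measure extending $\mu_{x}$, because $\mathcal{A}$ generates $\mathcal{B}(\Omega)$. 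Polarizing, we get complex measures $\mu_{x,y}$ that are sesquilinear in $(x,y)$ on $\mathcal{A}$. By uniqueness of extension they remain sesquilinear on all Borel sets, and they satisfy $|\mu_{x,y}(S)|\leq\|x\|\|y\|$ by Cauchy--Schwarz for the positive form $\mu_{x,x}$. The Riesz representation for bounded sesquilinear forms therefore yields operators $E(S)$ with $0\leq E(S)\leq I$ and $E(\Omega)=I$. Weak countable additivity holds because each $\mu_{x,y}$ is countably additive.

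Uniqueness of $E$ follows from the same argument. Any two POVMs with the same cylinder values give, for each $x$, scalar measures that agree on the $\pi$-system of cylinders. By the $\pi$--$\lambda$ theorem they agree on $\mathcal{B}(\Omega)$, and polarization gives equality of the operators. The only delicate point in the whole proof is this extension step, which is handled by the compactness of $\Omega$ and the clopen nature of cylinders. The rest is bookkeeping on the tree.
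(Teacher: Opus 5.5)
Your proposal is correct and follows the paper's proof. The forward direction applies Lemma~\ref{lem:2-1} to $0\leq E_{w0}\leq E_{w}$ and sets $E_{w1}=E_{w}-E_{w0}$; the converse scalarizes via $\langle h,E_{w}h\rangle$, extends to Borel measures, polarizes, and uses the Riesz representation for bounded sesquilinear forms. The differences are cosmetic: you get the scalar extension from Carath\'eodory plus compactness of clopen sets and make uniqueness explicit via $\pi$--$\lambda$, whereas the paper cites Kolmogorov extension, which rests on the same compactness argument.
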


\begin{proof}
Let $E$ be a POVM. By \prettyref{eq:2-5}, $0\leq E_{w0}\leq E_{w}$.
Applying \prettyref{lem:2-1} with $F=E_{w0}$ and $G=E_{w}$ gives
\prettyref{eq:2-8} and the uniqueness of $A_{w}$. Since $E_{w1}=E_{w}-E_{w0}$,
\prettyref{eq:2-9} follows.

For the converse, the recursive construction gives positive cylinder
values satisfying \prettyref{eq:2-5}. For every $h\in H$, define
\[
\mu_{h}\left(\left[w\right]\right)=\left\langle h,E_{w}h\right\rangle .
\]
Then $\mu_{h}$ gives a finite positive measure on each finite-level
cylinder algebra, and these finite-level measures are compatible under
refinement. Since 
\[
\mu_{h}\left(\Omega\right)=\left\Vert h\right\Vert ^{2},
\]
Kolmogorov extension gives a unique finite positive Borel measure
on $\Omega$, still denoted by $\mu_{h}$.

For $h,k\in H$, define 
\[
\mu_{h,k}=\frac{1}{4}\sum^{3}_{\ell=0}\left(-i\right)^{\ell}\mu_{h+i^{\ell}k}.
\]
On cylinders, 
\[
\mu_{h,k}\left(\left[w\right]\right)=\left\langle h,E_{w}k\right\rangle .
\]
The scalar identities in the polarization formula pass to Borel sets
by uniqueness of scalar measure extension. Thus, for each Borel set
$B\subset\Omega$, the map 
\[
\left(h,k\right)\mapsto\mu_{h,k}\left(B\right)
\]
is sesquilinear. It is bounded because 
\[
\left|\mu_{h,k}\left(B\right)\right|^{2}\leq\mu_{h}\left(B\right)\mu_{k}\left(B\right)\leq\left\Vert h\right\Vert ^{2}\left\Vert k\right\Vert ^{2}.
\]
Hence there is a unique operator $E\left(B\right)\in B\left(H\right)$
such that 
\[
\mu_{h,k}\left(B\right)=\left\langle h,E\left(B\right)k\right\rangle .
\]
These operators form a POVM, and their cylinder values are the prescribed
operators $E_{w}$. 
\end{proof}

The positive contractions $A_{w}$ in \prettyref{prop:2-2} are the
local coordinates of $E$ along the binary tree. We shall call them
the splitting operators.
\begin{rem}
\label{rem:2-3}For scalar measures, \prettyref{prop:2-2} reduces
to the usual conditional splitting along the binary tree. If $\nu$
is a probability measure on $\Omega$, write $\nu_{w}=\nu\left(\left[w\right]\right)$.
Whenever $\nu_{w}>0$, there is a unique $p_{w}\in\left[0,1\right]$
such that 
\[
\nu_{w0}=p_{w}\nu_{w},\qquad\nu_{w1}=\left(1-p_{w}\right)\nu_{w}.
\]
If $\nu_{w}=0$, then both children have zero mass and $p_{w}$ may
be chosen arbitrarily. This scalar recursion can be written as 
\begin{equation}
\nu_{w0}=\nu^{1/2}_{w}p_{w}\nu^{1/2}_{w},\qquad\nu_{w1}=\nu^{1/2}_{w}\left(1-p_{w}\right)\nu^{1/2}_{w}.\label{eq:2-10}
\end{equation}
Thus the scalar number $p_{w}$ is replaced in \prettyref{prop:2-2}
by a positive contraction $A_{w}$ on $H_{w}=\overline{ran}\,(E^{1/2}_{w})$,
and \prettyref{eq:2-8} and \prettyref{eq:2-9} replace the two scalar
formulas in \prettyref{eq:2-10}.

If $H=\mathbb{C}$, this gives the scalar recursion. If the operators
$E_{w}$ commute, the same reduction occurs after simultaneous diagonalization.
Indeed, in a multiplication representation, $E_{w}=M_{f_{w}}$ and
$E_{w0}=M_{f_{w0}}$ with $0\leq f_{w0}\leq f_{w}$, and the splitting
operator is multiplication by 
\[
a_{w}=\begin{cases}
f_{w0}/f_{w}, & f_{w}>0,\\
0, & f_{w}=0.
\end{cases}
\]
Then 
\[
f_{w0}=f_{w}a_{w},\qquad f_{w1}=f_{w}\left(1-a_{w}\right).
\]
\end{rem}

The same coordinates also identify the projection-valued case. For
scalar measures, the corresponding condition is that the splitting
numbers take only the values $0$ and $1$. For operator-valued measures,
the scalar $0$-$1$ condition is replaced by the condition that each
local splitting operator is a projection on the current range space.
\begin{prop}
\label{prop:2-4} Let $E$ be a POVM on $\Omega$, and let $A_{w}$
be its splitting operators. Then $E$ is projection-valued if and
only if $A^{2}_{w}=A_{w}$ for every finite word $w$. 
\end{prop}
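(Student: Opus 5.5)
We argue one direction at a time, working throughout with the identities \prettyref{eq:2-8} and \prettyref{eq:2-9} and with the uniqueness in \prettyref{lem:2-1}.

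\emph{Forward direction.} Suppose $E$ is projection-valued, so every $E_w$ is a projection. Then $E_w^{1/2}=E_w$ and $H_w=\operatorname{ran}E_w$. Since $E_{w0}\le E_w$ are commuting projections, $E_{w0}=E_wE_{w0}E_w$. Let $B$ be the restriction of $E_{w0}$ to $H_w$; it is a projection on $H_w$ and satisfies \prettyref{eq:2-8}. By the uniqueness in \prettyref{lem:2-1}, $A_w=B$, so $A_w^2=A_w$.

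\emph{Converse.} Assume $A_w^2=A_w$ for all $w$. We show by induction on $|w|$ that every cylinder value $E_w$ is a projection. The base case holds because $E_\varnothing=I$. For the inductive step, let $E_w$ be a projection. Then $E_w^{1/2}=E_w$, which acts as the identity on $H_w$, so \prettyref{eq:2-8} reads $E_{w0}=A_w\oplus 0$ on $H_w\oplus H_w^\perp$. This is a projection. Similarly $E_{w1}=(I_{H_w}-A_w)\oplus 0$ is a projection, and $E_{w0}E_{w1}=0$.

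We then pass from cylinders to all Borel sets. The cylinder sets of a fixed length $n$ partition $\Omega$, and their values are mutually orthogonal projections summing to $I$: orthogonality of siblings propagates down the tree, since $E_{wi}\le E_w$ and the $E_w$ at level $n$ are orthogonal. Finite unions of cylinders therefore also have projection values. Let $\mathcal{A}$ be the cylinder algebra and let $\mathcal{C}$ be the class of Borel sets $B$ with $E(B)^2=E(B)$ and $E(B)E(C)=E(B\cap C)$ for all $C\in\mathcal{A}$. This class contains $\mathcal{A}$. We would show that $\mathcal{C}$ is a monotone class. Under monotone limits $E(B_n)\to E(B)$ strongly, because $\langle h,E(\cdot)h\rangle$ is countably additive and the $E(B_n)$ are monotone. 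Strong limits of projections that are monotone in the operator order are projections, and the multiplicativity relation passes to the strong limit. Hence $\mathcal{C}$ contains $\mathcal{B}(\Omega)$.

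A cleaner alternative is to fix $h$ and use the measure $\mu_h$ from \prettyref{prop:2-2}. The identity $\|E(B)h\|^2=\langle h,E(B)h\rangle$ holds on $\mathcal{A}$, and both sides are continuous under approximation of $B$ in $\mu_h$-measure. Indeed, $\|E(B)h-E(B')h\|^2\le\langle h,E(B\triangle B')h\rangle$ because $(E(B)-E(B'))^2\le E(B)+E(B')-2E(B\cap B')$. This last inequality, however, itself presupposes some multiplicativity, so we would follow the monotone class route.

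We expect this passage from cylinders to arbitrary Borel sets to be the only step needing care; the rest is direct computation with \prettyref{eq:2-8}, \prettyref{eq:2-9} and \prettyref{lem:2-1}.
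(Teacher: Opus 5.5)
Your proposal is correct and follows the paper's proof. The forward direction identifies $A_w$ with $E_{w0}|_{H_w}$, the converse is the same induction on $|w|$, and the Borel extension is a monotone class argument; your single pass over $\mathcal{C}$ is a bit leaner than the paper's two-stage proof of full multiplicativity $E(B\cap C)=E(B)E(C)$, since idempotence alone survives monotone strong limits.

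Incidentally, the alternative you set aside needs no multiplicativity. Writing $E(B)-E(B')=E(B\setminus B')-E(B'\setminus B)$ and using $\|Dh\|^2\le\langle h,Dh\rangle$ for $0\le D\le I$ gives $\|(E(B)-E(B'))h\|^2\le 2\langle h,E(B\triangle B')h\rangle$ for any POVM, so that route would also have worked.
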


\begin{proof}
Suppose first that $E$ is projection-valued. Then each $E_{w}$ is
an orthogonal projection, and $H_{w}=E_{w}H$. Since $\left[w0\right]\subseteq\left[w\right]$,
we have $E_{w0}\leq E_{w}$. Hence $E_{w}E_{w0}=E_{w0}$. Also $E^{1/2}_{w}=E_{w}$.
By the defining identity for $A_{w}$, 
\[
E_{w0}=E_{w}A_{w}E_{w}.
\]
Since $A_{w}$ acts on $H_{w}$ and is extended by zero on $H^{\perp}_{w}$,
this gives 
\[
A_{w}=\left.E_{w0}\right|_{H_{w}}.
\]
Thus $A_{w}$ is an orthogonal projection on $H_{w}$.

Conversely, suppose that every $A_{w}$ is an orthogonal projection.
We prove by induction on $\left|w\right|$ that each $E_{w}$ is an
orthogonal projection. This is true for $w=\varnothing$, since $E_{\varnothing}=I$.
Assume that $E_{w}$ is a projection. Then $H_{w}=E_{w}H$ and $E^{1/2}_{w}=E_{w}$.
Since $A_{w}$ is a projection on $H_{w}$, 
\[
E_{w0}=E_{w}A_{w}E_{w}=A_{w}
\]
as an operator on $H$, with $A_{w}$ extended by zero on $H^{\perp}_{w}$.
Hence $E_{w0}$ is a projection. Similarly, 
\[
E_{w1}=E_{w}\left(I_{H_{w}}-A_{w}\right)E_{w}=E_{w}-A_{w},
\]
so $E_{w1}$ is also a projection. This proves the induction step.

It remains to pass from cylinders to Borel sets. For finite words
$u$ and $v$, the projections $E_{u}$ and $E_{v}$ satisfy 
\[
E_{u}E_{v}=E\left(\left[u\right]\cap\left[v\right]\right).
\]
Indeed, if one of $u,v$ extends the other, this follows from the
order relation between the corresponding cylinder projections. If
the two words are incompatible, the cylinder sets are disjoint and
the corresponding projections are orthogonal.

By finite additivity, the identity 
\[
E\left(B\cap C\right)=E\left(B\right)E\left(C\right)
\]
holds whenever $B$ and $C$ are finite unions of cylinders. A standard
monotone class argument, first in $B$ with $C$ fixed and then in
$C$ with $B$ fixed, extends this identity to all Borel sets $B,C\subseteq\Omega$.
Taking $C=B$ gives 
\[
E\left(B\right)^{2}=E\left(B\right).
\]
Since $E\left(B\right)$ is positive, it is an orthogonal projection.
Thus $E$ is projection-valued.
\end{proof}

\begin{rem}
\label{rem:2-5} In the scalar case, the projection-valued condition
means that each local splitting number is either $0$ or $1$. Thus
each nonzero cylinder sends all of its mass to one child. The operator-valued
case is different. The condition in \prettyref{prop:2-4} only requires
$A_{w}$ to be a projection on $H_{w}$; it does not require $A_{w}$
to be either $0$ or $I_{H_{w}}$. A projection-valued measure (PVM)
may split 
\[
H_{w}=A_{w}H_{w}\oplus\left(I_{H_{w}}-A_{w}\right)H_{w}
\]
with both summands nonzero. Thus the scalar $0$-$1$ condition is
replaced with an orthogonal decomposition of the current range space. 
\end{rem}

The binary tree case has a finite-alphabet version. The symmetric
form is not obtained by ordering the children and making repeated
binary choices. Instead, each vertex carries a finite positive decomposition
of the current range space.
\begin{thm}
\label{thm:2-6} Let $m\geq2$, and let 
\begin{equation}
\Omega_{m}=\left\{ 0,\ldots,m-1\right\} ^{\mathbb{N}}\label{eq:2-11}
\end{equation}
with its product Borel structure. If $E$ is a POVM on $\Omega_{m}$,
write 
\begin{equation}
E_{w}=E\left(\left[w\right]\right),\qquad H_{w}=\overline{ran}\,(E^{1/2}_{w})\label{eq:2-12a}
\end{equation}
for each finite word $w$ in the alphabet $\left\{ 0,\ldots,m-1\right\} $.
Then, for every $w$, there are unique positive contractions 
\begin{equation}
A^{\left(0\right)}_{w},\ldots,A^{\left(m-1\right)}_{w}\in B\left(H_{w}\right)\label{eq:2-12}
\end{equation}
such that 
\begin{equation}
\sum^{m-1}_{j=0}A^{\left(j\right)}_{w}=I_{H_{w}}\label{eq:2-13}
\end{equation}
and 
\begin{equation}
E_{wj}=E^{1/2}_{w}A^{\left(j\right)}_{w}E^{1/2}_{w}.\label{eq:2-14}
\end{equation}

Conversely, suppose that $E_{\varnothing}=I$, and that the cylinder
values are defined recursively by positive contractions $A^{\left(j\right)}_{w}\in B\left(H_{w}\right)$
satisfying \prettyref{eq:2-13} and \prettyref{eq:2-14}. Then there
is a unique POVM $E$ on $\Omega_{m}$ with these cylinder values. 
\end{thm}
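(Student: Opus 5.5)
The plan is to mirror the proof of \prettyref{prop:2-2}, with \prettyref{lem:2-1} applied once for each child.

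\textbf{Existence.} Fix a finite word $w$. By finite additivity, $E_{w}=\sum_{j=0}^{m-1}E_{wj}$, so $0\leq E_{wj}\leq E_{w}$ for each $j$. Apply \prettyref{lem:2-1} with $F=E_{wj}$ and $G=E_{w}$. This gives, for each $j$, a unique positive contraction $A^{(j)}_{w}$ on $H_{w}$ satisfying \prettyref{eq:2-14}. Set $S=\sum_{j}A^{(j)}_{w}\in B(H_{w})$. Summing the identities \prettyref{eq:2-14} gives
\[
E^{1/2}_{w}SE^{1/2}_{w}=E_{w}=E^{1/2}_{w}I_{H_{w}}E^{1/2}_{w}.
\]
Both $S$ and $I_{H_{w}}$ are positive operators on $H_{w}$. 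The density argument in the uniqueness part of \prettyref{lem:2-1} only uses that they are bounded self-adjoint operators on $H_{w}$ compressing to the same operator, and $\operatorname{ran}(E^{1/2}_{w})$ is dense in $H_{w}$. Hence $S=I_{H_{w}}$, which is \prettyref{eq:2-13}.

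\textbf{Uniqueness.} Suppose $B^{(0)}_{w},\dots,B^{(m-1)}_{w}$ are positive contractions on $H_{w}$ satisfying \prettyref{eq:2-13} and \prettyref{eq:2-14}. Then each $B^{(j)}_{w}$ satisfies \prettyref{eq:2-7} with $F=E_{wj}$ and $G=E_{w}$. By the uniqueness clause of \prettyref{lem:2-1}, $B^{(j)}_{w}=A^{(j)}_{w}$. So condition \prettyref{eq:2-13} is automatic and is not needed for uniqueness.

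\textbf{Converse.} The recursion runs as follows. Given $E_{w}\geq0$, form $H_{w}$ and define $E_{wj}$ by \prettyref{eq:2-14}. Each $E_{wj}$ is positive, since it is a compression of a positive operator. By \prettyref{eq:2-13},
\[
\sum_{j}E_{wj}=E^{1/2}_{w}I_{H_{w}}E^{1/2}_{w}=E_{w},
\]
where we use $E^{1/2}_{w}P_{w}E^{1/2}_{w}=E_{w}$ and $P_{w}$ is the projection onto $H_{w}$. This is the $m$-ary analogue of \prettyref{eq:2-5}. For $h\in H$, the set function $[w]\mapsto\langle h,E_{w}h\rangle$ is then a finitely additive, compatible family of finite measures on the level-$n$ cylinder algebras of $\Omega_{m}$, with total mass $\|h\|^{2}$. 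Kolmogorov extension (equivalently, Carathéodory on the compact space $\Omega_{m}$, where cylinders are clopen so finite additivity on the cylinder algebra implies countable additivity there) yields a unique Borel measure $\mu_{h}$.

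We then polarize exactly as in \prettyref{prop:2-2}. Uniqueness of the scalar extensions makes $(h,k)\mapsto\mu_{h,k}(B)$ sesquilinear, and the bound $|\mu_{h,k}(B)|^{2}\leq\mu_{h}(B)\mu_{k}(B)$ makes it bounded. This produces operators $E(B)$. They are positive, $E(\Omega_{m})=I$, and countable additivity in the weak operator topology follows from countable additivity of each $\mu_{h,k}$.

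Uniqueness of $E$ holds because two POVMs with the same cylinder values give the same scalar measures $\langle h,E(\cdot)h\rangle$ on the generating $\pi$-system of cylinders. Hence they agree on $\mathcal{B}(\Omega_{m})$, and polarization gives equality of the operators.

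\textbf{Main obstacle.} The only delicate point is in the existence part: deducing $\sum_{j}A^{(j)}_{w}=I_{H_{w}}$ from the equality of compressions. This requires care that each $A^{(j)}_{w}$ genuinely lives on $H_{w}$, extended by zero on $H_{w}^{\perp}$, so that the density of $\operatorname{ran}(E^{1/2}_{w})$ in $H_{w}$ applies. Everything else is routine bookkeeping carried over from the binary case.
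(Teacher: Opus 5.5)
Your proposal is correct and follows essentially the same route as the paper. You apply \prettyref{lem:2-1} child by child, use the density of $ran\,(E^{1/2}_{w})$ in $H_{w}$ to turn the summed compressions into $\sum_{j}A^{(j)}_{w}=I_{H_{w}}$, and feed the compatibility $E_{w}=\sum_{j}E_{wj}$ into the scalarization and polarization argument of \prettyref{prop:2-2}; your extra details (clopen cylinders giving countable additivity, the $\pi$-system uniqueness) only make explicit what the paper leaves to that earlier proof.
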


\begin{proof}
Let $E$ be a POVM on $\Omega_{m}$. For every finite word $w$, countable
additivity gives 
\begin{equation}
E_{w}=\sum^{m-1}_{j=0}E_{wj}.\label{eq:2-15}
\end{equation}
In particular, $0\leq E_{wj}\leq E_{w}$ for each $j$. Applying \prettyref{lem:2-1}
to $F=E_{wj}$ and $G=E_{w}$ gives a unique positive contraction
$A^{\left(j\right)}_{w}$ satisfying \prettyref{eq:2-14}.

It remains to prove \prettyref{eq:2-13}. Summing \prettyref{eq:2-14}
over $j$ and using \prettyref{eq:2-15} gives 
\[
E^{1/2}_{w}\left(\sum\nolimits^{m-1}_{j=0}A^{\left(j\right)}_{w}-I_{H_{w}}\right)E^{1/2}_{w}=0.
\]
Thus 
\[
\left\langle E^{1/2}_{w}x,\left(\sum\nolimits^{m-1}_{j=0}A^{\left(j\right)}_{w}-I_{H_{w}}\right)E^{1/2}_{w}y\right\rangle =0
\]
for all $x,y\in H$. Since $ran\,(E^{1/2}_{w})$ is dense in $H_{w}$,
\prettyref{eq:2-13} follows.

Conversely, assume that the cylinder values are recursively defined
by \prettyref{eq:2-13} and \prettyref{eq:2-14}. Then each $E_{wj}$
is positive, and summing \prettyref{eq:2-14} over $j$ gives \prettyref{eq:2-15}.
Hence the cylinder values are compatible under refinement. The scalarization
and polarization argument in \prettyref{prop:2-2} then gives a unique
POVM on $\Omega_{m}$ with these cylinder values. 
\end{proof}

\begin{defn}
\label{def:2-7} For a POVM $E$ on $\Omega_{m}$, the local tree
coordinates of $E$ are the positive contractions in \prettyref{eq:2-12}
given by \prettyref{thm:2-6}. We shall also call them the splitting
operators of $E$. 
\end{defn}

\section{Range space dilation}\label{sec:3}

The local splitting operators from \prettyref{sec:2} do more than
recover the cylinder values. In this section, we show that they give
a direct construction of a Naimark dilation. This is in the spirit
of dilation theory for positive and completely positive maps, especially
Arveson's dilation and extension framework \cite{MR253059,MR2425180,MR2641960,MR2823981},
and of later work on dilations of operator-valued measures \cite{MR3778680}.

Throughout this section let $m\geq2$, and let $\Omega_{m}$ be as
in \prettyref{eq:2-11}. Let $E$ be a POVM on $\Omega_{m}$, and
let 
\[
E_{w}=E\left(\left[w\right]\right),\qquad H_{w}=\overline{ran}\,(E^{1/2}_{w}).
\]
For each finite word $w$ and each $0\leq j\leq m-1$, define 
\[
C_{wj}:H_{w}\to H_{wj}
\]
on the dense subspace $ran\,(E^{1/2}_{w})\subseteq H_{w}$ by 
\begin{equation}
C_{wj}E^{1/2}_{w}h=E^{1/2}_{wj}h.\label{eq:3-1}
\end{equation}
This is well-defined and contractive. Indeed, if $E^{1/2}_{w}h=0$,
then $E^{1/2}_{wj}h=0$, since $0\leq E_{wj}\leq E_{w}$. Moreover,
\[
\Vert E^{1/2}_{wj}h\Vert^{2}=\left\langle h,E_{wj}h\right\rangle \leq\left\langle h,E_{w}h\right\rangle =\Vert E^{1/2}_{w}h\Vert^{2}.
\]
Thus \prettyref{eq:3-1} extends to a contraction from $H_{w}$ to
$H_{wj}$.
\begin{lem}
\label{lem:3-1} For every finite word $w$ and every $0\leq j\leq m-1$,
\[
C^{*}_{wj}C_{wj}=A^{\left(j\right)}_{w},
\]
where $A^{\left(j\right)}_{w}$ is the local splitting operator from
\prettyref{thm:2-6}. 
\end{lem}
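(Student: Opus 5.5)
The plan is to compare the two positive operators $C^{*}_{wj}C_{wj}$ and $A^{(j)}_{w}$ on $H_{w}$ through their sesquilinear forms on the dense subspace $ran\,(E^{1/2}_{w})$. Then we conclude by density and boundedness, using the same density argument as in the uniqueness part of \prettyref{lem:2-1}.

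Fix $x,y\in H$. By the defining relation \prettyref{eq:3-1},
\[
\left\langle C_{wj}E^{1/2}_{w}x,C_{wj}E^{1/2}_{w}y\right\rangle =\left\langle E^{1/2}_{wj}x,E^{1/2}_{wj}y\right\rangle =\left\langle x,E_{wj}y\right\rangle .
\]
By \prettyref{eq:2-14} in \prettyref{thm:2-6}, the right-hand side equals
\[
\left\langle x,E^{1/2}_{w}A^{(j)}_{w}E^{1/2}_{w}y\right\rangle =\left\langle E^{1/2}_{w}x,A^{(j)}_{w}E^{1/2}_{w}y\right\rangle .
\]
This step uses that $E^{1/2}_{w}$ is self-adjoint and that $A^{(j)}_{w}$ is extended by zero off $H_{w}$. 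Since $E^{1/2}_{w}y\in H_{w}$, that extension plays no role here.

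Combining the two displays, the bounded operator $D=C^{*}_{wj}C_{wj}-A^{(j)}_{w}$ on $H_{w}$ satisfies $\left\langle u,Dv\right\rangle =0$ for all $u,v$ in the dense subspace $ran\,(E^{1/2}_{w})$. By continuity of $D$ and density, $\left\langle u,Dv\right\rangle =0$ for all $u,v\in H_{w}$, so $D=0$.

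There is no real obstacle here. The only point needing care is bookkeeping: $C_{wj}$ is first defined on $ran\,(E^{1/2}_{w})$ and then extended by continuity, and its adjoint is taken as a map $H_{wj}\to H_{w}$. The inner-product computation must therefore be done with the extended bounded operator before passing to the limit.

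As an alternative, one could check that $C^{*}_{wj}C_{wj}$ is a positive contraction on $H_{w}$ satisfying $E^{1/2}_{w}C^{*}_{wj}C_{wj}E^{1/2}_{w}=E_{wj}$. The claim would then follow from the uniqueness statement in \prettyref{lem:2-1} applied with $F=E_{wj}$ and $G=E_{w}$. This is the same argument in different packaging.
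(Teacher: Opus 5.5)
Your proposal is correct and follows essentially the same route as the paper. It computes $\langle C_{wj}E^{1/2}_{w}h,C_{wj}E^{1/2}_{w}k\rangle=\langle h,E_{wj}k\rangle=\langle E^{1/2}_{w}h,A^{(j)}_{w}E^{1/2}_{w}k\rangle$ via \prettyref{eq:3-1} and \prettyref{eq:2-14}, then concludes by density of $ran\,(E^{1/2}_{w})$ in $H_{w}$; your alternative through the uniqueness clause of \prettyref{lem:2-1} is the same argument repackaged.
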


\begin{proof}
For $h,k\in H$, \prettyref{eq:3-1} gives 
\[
\left\langle C_{wj}E^{1/2}_{w}h,C_{wj}E^{1/2}_{w}k\right\rangle =\left\langle E^{1/2}_{wj}h,E^{1/2}_{wj}k\right\rangle =\left\langle h,E_{wj}k\right\rangle .
\]
By \prettyref{eq:2-14}, 
\[
\left\langle h,E_{wj}k\right\rangle =\left\langle E^{1/2}_{w}h,A^{\left(j\right)}_{w}E^{1/2}_{w}k\right\rangle .
\]
Since $ran\,(E^{1/2}_{w})$ is dense in $H_{w}$, the result follows. 
\end{proof}

For each finite word $w$, define 
\begin{equation}
J_{w}:H_{w}\longrightarrow\bigoplus^{m-1}_{j=0}H_{wj},\qquad J_{w}x=\left(C_{w0}x,\ldots,C_{w,m-1}x\right).\label{eq:3-2}
\end{equation}
By \prettyref{lem:3-1} and \prettyref{eq:2-13}, 
\[
J^{*}_{w}J_{w}=\sum^{m-1}_{j=0}C^{*}_{wj}C_{wj}=\sum^{m-1}_{j=0}A^{\left(j\right)}_{w}=I_{H_{w}}.
\]
Hence $J_{w}$ is an isometry.

For $n\geq0$, set 
\begin{equation}
K_{n}=\bigoplus_{\left|w\right|=n}H_{w}.\label{eq:3-3}
\end{equation}
The maps $J_{w}$ assemble into an isometry 
\[
J_{n}:K_{n}\to K_{n+1}
\]
defined by 
\begin{equation}
\left(J_{n}x\right)_{wj}=C_{wj}x_{w},\qquad\left|w\right|=n,\quad0\leq j\leq m-1.\label{eq:3-4}
\end{equation}
Let $K_{E}$ be the Hilbert space direct limit of the inductive system
\begin{equation}
K_{0}\xrightarrow{J_{0}}K_{1}\xrightarrow{J_{1}}K_{2}\xrightarrow{J_{2}}\cdots.\label{eq:3-5}
\end{equation}
Let 
\[
U_{n}:K_{n}\to K_{E}
\]
be the canonical isometries. Thus 
\[
U_{n+1}J_{n}=U_{n},
\]
and 
\[
K_{E}=\overline{\bigcup_{n\geq0}U_{n}K_{n}}.
\]
Since $E_{\varnothing}=I$, we have $K_{0}=H$. We write 
\begin{equation}
V=U_{0}:H\to K_{E}.\label{eq:3-6}
\end{equation}

For $0\leq r\leq n$, let 
\[
J_{r,n}=J_{n-1}\cdots J_{r}:K_{r}\to K_{n},
\]
with $J_{n,n}=I_{K_{n}}$.
\begin{lem}
\label{lem:3-2} For every $n\geq0$ and every $h\in H$, 
\[
\left(J_{0,n}h\right)_{w}=E^{1/2}_{w}h,\qquad\left|w\right|=n.
\]
\end{lem}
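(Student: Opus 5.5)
The plan is induction on $n$, using only the defining relation \prettyref{eq:3-1} of the edge contractions and the block formula \prettyref{eq:3-4} for $J_{n}$.

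\textbf{Base case.} For $n=0$ we have $J_{0,0}=I_{K_{0}}$, and $K_{0}=H_{\varnothing}=H$ because $E_{\varnothing}=I$. The only word of length zero is $\varnothing$, and $E^{1/2}_{\varnothing}h=h$. So the identity $(J_{0,0}h)_{\varnothing}=h=E^{1/2}_{\varnothing}h$ holds trivially.

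\textbf{Inductive step.} Assume $(J_{0,n}h)_{w}=E^{1/2}_{w}h$ for all $|w|=n$. Since $J_{0,n+1}=J_{n}J_{0,n}$, formula \prettyref{eq:3-4} gives, for $|w|=n$ and $0\leq j\leq m-1$,
\[
(J_{0,n+1}h)_{wj}=C_{wj}\,(J_{0,n}h)_{w}=C_{wj}E^{1/2}_{w}h .
\]
The vector $E^{1/2}_{w}h$ lies in the dense subspace $ran\,(E^{1/2}_{w})\subseteq H_{w}$, where $C_{wj}$ was originally defined. So \prettyref{eq:3-1} applies directly and yields $C_{wj}E^{1/2}_{w}h=E^{1/2}_{wj}h$. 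Every word of length $n+1$ has the form $wj$ with $|w|=n$, which completes the induction.

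\textbf{Care points.} There is no real obstacle here; two points just need to be stated explicitly. First, the component $(J_{0,n}h)_{w}$ genuinely lies in $ran\,(E^{1/2}_{w})$, not merely in its closure $H_{w}$. This is exactly what the induction hypothesis says, and it is what lets us use \prettyref{eq:3-1} rather than the continuous extension of $C_{wj}$. Second, the base case relies on the identification $K_{0}=H$, which is justified by $E_{\varnothing}=I$.
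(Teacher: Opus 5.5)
Your proposal is correct and follows the paper's proof essentially verbatim: induction on $n$, with the trivial base case $K_{0}=H$ and the inductive step $(J_{0,n+1}h)_{wj}=C_{wj}(J_{0,n}h)_{w}=C_{wj}E^{1/2}_{w}h=E^{1/2}_{wj}h$ via \eqref{eq:3-1}. Your remark that $E^{1/2}_{w}h$ lies in $ran\,(E^{1/2}_{w})$ itself is a harmless extra: the continuous extension of $C_{wj}$ agrees with \eqref{eq:3-1} on that dense subspace anyway.
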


\begin{proof}
The assertion is clear for $n=0$. If it holds at level $n$, then
for $\left|w\right|=n$ and $0\leq j\leq m-1$, 
\[
\left(J_{0,n+1}h\right)_{wj}=C_{wj}\left(J_{0,n}h\right)_{w}=C_{wj}E^{1/2}_{w}h=E^{1/2}_{wj}h
\]
by \prettyref{eq:3-1}. This proves the induction step. 
\end{proof}

The direct limit carries natural cylinder projections. At a finite
level, the projection associated with a word $w$ keeps the summands
corresponding to descendants of $w$. These finite level projections
are compatible with the connecting isometries, so they pass to the
direct limit.
\begin{thm}
\label{thm:3-3} For a finite word $w$ and any $n\geq\left|w\right|$,
let $P^{\left(n\right)}_{w}$ be the orthogonal projection of $K_{n}$
onto 
\[
\bigoplus_{\left|u\right|=n,\,u\succeq w}H_{u}
\]
where $u\succeq w$ means $u$ extends $w$. Then 
\begin{equation}
P^{\left(n+1\right)}_{w}J_{n}=J_{n}P^{\left(n\right)}_{w},\qquad n\geq\left|w\right|.\label{eq:3-7}
\end{equation}
Hence there are projections $P_{w}\in B\left(K_{E}\right)$ such that
\begin{equation}
P_{w}U_{n}=U_{n}P^{\left(n\right)}_{w},\qquad n\geq\left|w\right|.\label{eq:3-8}
\end{equation}
Moreover, there is a unique PVM 
\[
P:\mathcal{B}\left(\Omega_{m}\right)\to B\left(K_{E}\right)
\]
such that 
\begin{equation}
P\left(\left[w\right]\right)=P_{w}\label{eq:3-9}
\end{equation}
for every finite word $w$. 
\end{thm}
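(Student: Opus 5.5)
The proof has three steps: check the intertwining relation \prettyref{eq:3-7} coordinatewise, pass the compatible projections to the direct limit, and extend from cylinders to a PVM.

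\textbf{Step 1: the intertwining relation.} By \prettyref{eq:3-4}, $J_{n}$ maps the summand $H_{u}$ ($|u|=n$) into $\bigoplus_{j}H_{uj}$. Moreover, for $n\geq|w|$ a word $uj$ of length $n+1$ extends $w$ if and only if $u$ extends $w$. Hence for $x\in K_{n}$,
\[
(P^{(n+1)}_{w}J_{n}x)_{uj}=\mathbf{1}_{u\succeq w}\,C_{uj}x_{u}=C_{uj}(P^{(n)}_{w}x)_{u}=(J_{n}P^{(n)}_{w}x)_{uj},
\]
which is \prettyref{eq:3-7}. Iterating gives $P^{(n)}_{w}J_{r,n}=J_{r,n}P^{(r)}_{w}$ for $|w|\leq r\leq n$.

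\textbf{Step 2: projections on the direct limit.} On the dense subspace $\bigcup_{n\geq|w|}U_{n}K_{n}$, I would define $P_{w}U_{n}x=U_{n}P^{(n)}_{w}x$. This is well defined: if $U_{n}x=U_{r}y$ with $r\leq n$, then $x=J_{r,n}y$, because $U_{n}$ is isometric and $U_{n}J_{r,n}=U_{r}$. Step 1 then gives $U_{n}P^{(n)}_{w}x=U_{r}P^{(r)}_{w}y$. The map is contractive since $U_{n}$ is isometric, so it extends to $K_{E}$. On each $U_{n}K_{n}$ it is self-adjoint and idempotent, and $U_{n}K_{n}$ is invariant under it. By density and continuity, $P_{w}$ is an orthogonal projection satisfying \prettyref{eq:3-8}.

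\textbf{Step 3: extension to a PVM.} The finite-level projections satisfy three relations:
\begin{itemize}
\item[(a)] $P^{(n)}_{\varnothing}=I$;
\item[(b)] $P^{(n)}_{w}=\sum_{j}P^{(n)}_{wj}$;
\item[(c)] $P^{(n)}_{u}P^{(n)}_{v}=0$ for incompatible $u,v$, and $P^{(n)}_{u}P^{(n)}_{v}=P^{(n)}_{v}$ when $v\succeq u$.
\end{itemize}
These transfer to the $P_{w}$ by density, so $w\mapsto P_{w}$ is a finitely additive, commuting projection-valued map on the cylinder algebra. To extend it to $\mathcal{B}(\Omega_{m})$, I would reuse the scalarization argument of \prettyref{prop:2-2} (via \prettyref{thm:2-6}). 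Here $E_{w}$ is replaced by $P_{w}$ and $H$ by $K_{E}$. The normalization $P_{\varnothing}=I$ and the additivity (b) are exactly the hypotheses that argument needs: for each $\xi$, the values $\langle\xi,P_{w}\xi\rangle$ are compatible and extend by Kolmogorov to a positive measure, and polarization then yields a POVM $P$ with $P([w])=P_{w}$, unique because cylinders form a $\pi$-system generating $\mathcal{B}(\Omega_{m})$. Finally, the second half of the proof of \prettyref{prop:2-4} carries over verbatim, with the same monotone class argument: multiplicativity $P(B\cap C)=P(B)P(C)$ holds on finite unions of cylinders by (c), hence on all Borel sets, so $P$ is projection-valued.

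The main obstacle I expect is the well-definedness and extension in Step 2, where the argument relies on the precise identification of $U_{n}x=U_{r}y$ inside the direct limit. The remaining steps are routine transfers of earlier arguments.
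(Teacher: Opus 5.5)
Your proposal is correct and follows the paper's proof essentially step by step. You check \eqref{eq:3-7} coordinatewise, define $P_w$ on the dense union $\bigcup_n U_nK_n$ and verify well-definedness, contractivity, self-adjointness and idempotence, and then use the scalarization and polarization argument of Proposition~\ref{prop:2-2} followed by the monotone class argument of Proposition~\ref{prop:2-4}. The only cosmetic difference is how you get well-definedness: you use injectivity of the isometry $U_n$ (so $U_nx=U_ry$ forces $x=J_{r,n}y$), whereas the paper passes through $J_{r,t}x=J_{s,t}y$ at some common later level $t$.
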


\begin{proof}
The compatibility relation \prettyref{eq:3-7} follows directly from
the definition of $J_{n}$ in \prettyref{eq:3-4}. Indeed, both sides
keep precisely the components indexed by words extending $w$.

We now define $P_{w}$ on the direct limit. If $x\in K_{r}$ and $n\geq\max\left\{ r,\left|w\right|\right\} $,
set 
\[
P_{w}U_{r}x=U_{n}P^{\left(n\right)}_{w}J_{r,n}x.
\]
This is independent of $n$, by \prettyref{eq:3-7}. It is also independent
of the chosen representative. Indeed, if $U_{r}x=U_{s}y$, then for
some $t\geq\max\left\{ r,s,\left|w\right|\right\} $ we have 
\[
J_{r,t}x=J_{s,t}y.
\]
Applying $P^{\left(t\right)}_{w}$ gives 
\[
P^{\left(t\right)}_{w}J_{r,t}x=P^{\left(t\right)}_{w}J_{s,t}y,
\]
hence 
\[
U_{t}P^{\left(t\right)}_{w}J_{r,t}x=U_{t}P^{\left(t\right)}_{w}J_{s,t}y.
\]
Thus $P_{w}$ is well-defined on $\bigcup_{n\geq0}U_{n}K_{n}$. Since
\[
\left\Vert U_{n}P^{\left(n\right)}_{w}J_{r,n}x\right\Vert \leq\left\Vert U_{n}J_{r,n}x\right\Vert =\left\Vert U_{r}x\right\Vert ,
\]
$P_{w}$ extends to a contraction on $K_{E}$. On the dense subspace
$\bigcup_{n\geq0}U_{n}K_{n}$, the operator is self-adjoint and idempotent,
because each $P^{\left(n\right)}_{w}$ is an orthogonal projection.
Hence the extension is an orthogonal projection. This gives \prettyref{eq:3-8}.

The family $\left\{ P_{w}\right\} $ is compatible with the cylinder
structure. Namely, 
\[
P_{\varnothing}=I_{K_{E}},\qquad P_{w}=\sum^{m-1}_{j=0}P_{wj},
\]
and $P_{u}P_{v}=0$ whenever the words $u$ and $v$ are incompatible.
More generally, $P_{u}P_{v}=P_{z}$ if $\left[u\right]\cap\left[v\right]=\left[z\right]$,
and $P_{u}P_{v}=0$ if the intersection is empty.

By finite additivity, these cylinder values define a finitely additive
PVM on the cylinder algebra. Applying the scalarization and polarization
argument from \prettyref{prop:2-2} gives a POVM 
\[
P:\mathcal{B}\left(\Omega_{m}\right)\to B\left(K_{E}\right)
\]
satisfying \prettyref{eq:3-9}. The cylinder multiplicativity extends
to all Borel sets by the same monotone class argument used in \prettyref{prop:2-4}.
Hence $P$ is projection-valued. 
\end{proof}

We now verify that this PVM dilates the original POVM, and that the
direct limit is minimal.
\begin{thm}
\label{thm:3-4} The triple $\left(K_{E},P,V\right)$ is a minimal
Naimark dilation of $E$. That is, 
\begin{equation}
E\left(B\right)=V^{*}P\left(B\right)V,\qquad B\in\mathcal{B}\left(\Omega_{m}\right),\label{eq:3-10}
\end{equation}
and 
\begin{equation}
K_{E}=\overline{span}\left\{ P\left(B\right)Vh:B\in\mathcal{B}\left(\Omega_{m}\right),\ h\in H\right\} .\label{eq:3-11}
\end{equation}
\end{thm}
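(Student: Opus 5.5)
The plan is to first verify \prettyref{eq:3-10} on cylinders, then extend it to Borel sets by uniqueness of the scalar measures, and finally prove minimality by showing that the vectors $U_{n}x$ already lie in the closed span.

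\textbf{Dilation identity on cylinders.} Fix a word $w$ and $h,k\in H$, and choose $n\geq\left|w\right|$. Since $V=U_{0}$ and $U_{n}J_{0,n}=U_{0}$, \prettyref{eq:3-8} gives
\[
\left\langle Vh,P_{w}Vk\right\rangle =\left\langle U_{n}J_{0,n}h,U_{n}P^{\left(n\right)}_{w}J_{0,n}k\right\rangle =\left\langle J_{0,n}h,P^{\left(n\right)}_{w}J_{0,n}k\right\rangle .
\]
The last step uses that $U_{n}$ is an isometry. By \prettyref{lem:3-2}, the right side equals
\[
\sum_{\left|u\right|=n,\,u\succeq w}\left\langle E^{1/2}_{u}h,E^{1/2}_{u}k\right\rangle =\sum_{\left|u\right|=n,\,u\succeq w}\left\langle h,E_{u}k\right\rangle =\left\langle h,E_{w}k\right\rangle .
\]
The final equality follows by iterating \prettyref{eq:2-15}. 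This proves $V^{*}P_{w}V=E_{w}$.

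\textbf{Extension to Borel sets.} For fixed $h$, the maps $B\mapsto\left\langle Vh,P(B)Vh\right\rangle$ and $B\mapsto\left\langle h,E(B)h\right\rangle$ are finite positive countably additive measures. They agree on cylinders, and cylinders form a $\pi$-system generating $\mathcal{B}\left(\Omega_{m}\right)$ that contains $\Omega_{m}$. Hence the two measures coincide. Polarization then gives \prettyref{eq:3-10}.

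\textbf{Minimality.} Let $M$ denote the right side of \prettyref{eq:3-11}. Since $\bigcup_{n}U_{n}K_{n}$ is dense in $K_{E}$, it suffices to show $U_{n}K_{n}\subseteq M$ for every $n$. Fix a word $w$ with $\left|w\right|=n$, and let $x\in K_{n}$ be supported in the single summand $H_{w}$, with $x_{w}=E^{1/2}_{w}h$. Then $x=P^{\left(n\right)}_{w}J_{0,n}h$ by \prettyref{lem:3-2}, so
\[
U_{n}x=U_{n}P^{\left(n\right)}_{w}J_{0,n}h=P_{w}U_{n}J_{0,n}h=P_{w}Vh\in M.
\]
Vectors of the form $E^{1/2}_{w}h$ are dense in $H_{w}$, and $U_{n}$ is continuous, so $U_{n}\left(H_{w}\right)\subseteq M$ because $M$ is closed. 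Summing over the finitely many $w$ with $\left|w\right|=n$ gives $U_{n}K_{n}\subseteq M$, and therefore $M=K_{E}$.

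\textbf{Main difficulty.} Nothing here is a serious obstacle. The most delicate point is the minimality step, where one must make sure the cylinder projections of $Vh$ exhaust each summand $H_{w}$ densely rather than only the diagonal vectors $J_{0,n}h$. The projections $P^{\left(n\right)}_{w}$ separate the summands, and \prettyref{lem:3-2} identifies the $w$-component with $E^{1/2}_{w}h$, which ranges over a dense subset of $H_{w}$; this resolves the issue.
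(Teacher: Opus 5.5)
Your proposal is correct and follows essentially the same route as the paper: you prove the cylinder identity via \eqref{eq:3-8} and \prettyref{lem:3-2}, extend it to Borel sets by uniqueness of the scalar measures and polarization, and get minimality by identifying $P_{w}Vh$ with $U_{n}$ applied to the vector whose only nonzero component is $E^{1/2}_{w}h$, which densely fills $U_{n}H_{w}$. The only difference is cosmetic: you work at an arbitrary level $n\geq\left|w\right|$ and sum over descendants using \eqref{eq:2-15}, whereas the paper takes $n=\left|w\right|$ directly.
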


\begin{proof}
It is enough first to prove \prettyref{eq:3-10} on cylinders. Let
$w$ be a finite word and set $n=\left|w\right|$. By \prettyref{eq:3-8},
\prettyref{eq:3-9}, and \prettyref{lem:3-2}, 
\[
\left\langle Vh,P\left(\left[w\right]\right)Vk\right\rangle =\left\langle J_{0,n}h,P^{\left(n\right)}_{w}J_{0,n}k\right\rangle =\left\langle E^{1/2}_{w}h,E^{1/2}_{w}k\right\rangle =\left\langle h,E_{w}k\right\rangle .
\]
Thus 
\[
V^{*}P\left(\left[w\right]\right)V=E_{w}.
\]
By uniqueness of the POVM extension from cylinder values, \prettyref{eq:3-10}
holds for all Borel sets.

It remains to prove minimality. Fix $n\geq0$ and a word $w$ with
$\left|w\right|=n$. By \prettyref{eq:3-8} and \prettyref{lem:3-2},
\[
P_{w}Vh=U_{n}\left(0,\ldots,E^{1/2}_{w}h,\ldots,0\right),
\]
where the only nonzero component lies in the $w$-summand of $K_{n}$.
Since $H_{w}=\overline{ran}\left(E^{1/2}_{w}\right)$, the closed
span of vectors of this form is $U_{n}H_{w}$. Summing over all words
of length $n$, we obtain 
\[
U_{n}K_{n}\subseteq\overline{span}\left\{ P\left(B\right)Vh:B\in\mathcal{B}\left(\Omega_{m}\right),\ h\in H\right\} .
\]
Since $K_{E}$ is the closed union of the spaces $U_{n}K_{n}$, \prettyref{eq:3-11}
follows. 
\end{proof}

\begin{rem}
\label{rem:3-5} The construction of $K_{E}$ should be compared with
the usual abstract form of Naimark dilation. The abstract theorem
produces a dilation space and a PVM, but the tree structure need not
remain visible. Here each vertex $w$ carries the range space $H_{w}=\overline{ran}\,(E^{1/2}_{w})$,
and each edge carries the canonical contraction 
\[
C_{wj}:H_{w}\to H_{wj}.
\]
The splitting identity says that the maps from one vertex to all of
its children form an isometry 
\[
J_{w}:H_{w}\to\bigoplus^{m-1}_{j=0}H_{wj}.
\]
Putting together all vertices at the same level gives the isometries
$J_{n}:K_{n}\to K_{n+1}$, and the dilation space is the direct limit
of these level spaces. On this limit, the projection associated with
a cylinder keeps the descendant summands of that cylinder. Compression
back to the initial space recovers the original POVM. Thus the minimal
dilation is realized level by level, in the same local coordinates
that describe the original POVM.
\end{rem}

\section{Extremality and range martingales}\label{sec:4}

We next use the range space dilation to describe extremal POVMs. Extremality
of quantum observables and positive operator-valued measures has been
studied in several settings, including infinite dimensional observables,
commutative observables, generalized quantum measurements, covariant
measurements, and $C^{*}$-extreme points \cite{MR1810813,MR2105199,MR2262678,MR2432912,MR2770378,MR2817979,MR2930514,MR3133476,MR4112227,MR4340930}. 

Let $\mathcal{P}\left(\Omega_{m},H\right)$ denote the convex set
of POVMs on $\Omega_{m}$ with values in $B\left(H\right)$. Recall
that $E\in\mathcal{P}\left(\Omega_{m},H\right)$ is extreme if, whenever
$E=\frac{1}{2}\left(E_{1}+E_{2}\right)$ for $E_{1},E_{2}\in\mathcal{P}\left(\Omega_{m},H\right)$,
it follows that $E_{1}=E_{2}=E$. 

We use the following standard extremality criterion for POVMs, equivalently
Arveson's criterion for unital completely positive maps applied to
the minimal Naimark dilation. 
\begin{prop}
\label{prop:4-1} Let $E$ be a POVM on $\Omega_{m}$, and let $\left(K,P,V\right)$
be a minimal Naimark dilation of $E$. Thus $P:\mathcal{B}\left(\Omega_{m}\right)\to B\left(K\right)$
is a PVM, $V:H\to K$ is an isometry, 
\[
E\left(B\right)=V^{*}P\left(B\right)V,\qquad B\in\mathcal{B}\left(\Omega_{m}\right),
\]
and 
\[
K=\overline{span}\left\{ P\left(B\right)Vh:B\in\mathcal{B}\left(\Omega_{m}\right),\ h\in H\right\} .
\]
Then $E$ is extreme in $\mathcal{P}\left(\Omega_{m},H\right)$ if
and only if 
\[
\left\{ T\in P\left(\mathcal{B}\left(\Omega_{m}\right)\right)'_{\mathrm{sa}}:V^{*}TV=0\right\} =\left\{ 0\right\} .
\]
\end{prop}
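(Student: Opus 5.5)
We prove both directions through a Radon--Nikodym correspondence between POVMs dominated by a multiple of $E$ and positive elements of the commutant $P(\mathcal{B}(\Omega_m))'$.

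\emph{Step 1: commutant elements give admissible perturbations.} Let $T\in P(\mathcal{B}(\Omega_m))'_{\mathrm{sa}}$ satisfy $V^{*}TV=0$, and suppose $T\neq0$. After rescaling we may assume $\|T\|\leq1$. Define
\[
E_{\pm}(B)=V^{*}(I\pm T)P(B)V .
\]
Since $T$ commutes with every $P(B)$, we have $(I\pm T)P(B)=P(B)^{1/2}(I\pm T)P(B)$. This operator is positive because $P(B)$ is a projection commuting with $I\pm T\geq0$. Countable additivity in the weak operator topology is inherited from $P$. Moreover $E_{\pm}(\Omega_m)=V^{*}V\pm V^{*}TV=I$. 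So $E_{\pm}$ are POVMs with $E=\frac12(E_{+}+E_{-})$. If $E_{+}=E$, then $V^{*}TP(B)V=0$ for all $B$, and hence
\[
\langle P(B)Vh,TP(C)Vk\rangle=\langle h,V^{*}TP(B\cap C)Vk\rangle=0 .
\]
Minimality then forces $T=0$. Hence a nonzero $T$ makes $E$ non-extreme. This proves that extremality implies the commutant condition.

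\emph{Step 2: decompositions give commutant elements.} Suppose $E=\frac12(E_1+E_2)$. Then $0\leq E_1(B)\leq2E(B)$ for every $B$. We need a positive $S\in P(\mathcal{B})'$ with $S\leq 2I$ and $E_1(B)=V^{*}SP(B)V$.

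To build it, define a sesquilinear form on the dense span $\mathcal{D}$ of the vectors $P(B)Vh$ by
\[
\Big\langle \sum_i P(B_i)Vh_i,\sum_k P(C_k)Vk_k\Big\rangle_1=\sum_{i,k}\langle h_i,E_1(B_i\cap C_k)k_k\rangle .
\]
We first refine to disjoint sets. For a finite disjoint partition $\{D_\ell\}$ refining all $B_i,C_k$, a vector $x=\sum_iP(B_i)Vh_i$ becomes $\sum_\ell P(D_\ell)Vg_\ell$ with $g_\ell=\sum_{i:D_\ell\subseteq B_i}h_i$. In these coordinates
\[
\langle x,x\rangle_1=\sum_\ell\langle g_\ell,E_1(D_\ell)g_\ell\rangle\leq2\sum_\ell\langle g_\ell,E(D_\ell)g_\ell\rangle=2\|x\|^2 .
\]
Applying the same estimate to $E_1\geq 0$ gives positivity, and it also shows the form depends only on the vector $x$, so it is well defined. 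Hence there is a unique $0\leq S\leq2I$ on $K$ with $\langle x,Sy\rangle=\langle x,y\rangle_1$.

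For commutation, check on spanning vectors that $\langle P(D)x,y\rangle_1=\langle x,P(D)y\rangle_1$. Both sides equal $\sum\langle h,E_1(B\cap C\cap D)k\rangle$. Therefore $SP(D)=P(D)S$.

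Finally, $V^{*}SP(B)V=E_1(B)$ follows by taking $x=Vh$, $y=P(B)Vk$. Set $T=S-I$. Then $T$ is self-adjoint, lies in the commutant, and $V^{*}TV=E_1(\Omega_m)-I=0$. The hypothesis gives $T=0$, so $E_1=V^{*}P(\cdot)V=E$, and hence $E_2=E$. So $E$ is extreme.

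The main obstacle is Step 2, specifically the well-definedness and boundedness of the form. A general vector in $\mathcal{D}$ has many representations, and the refinement to disjoint sets is what resolves this. A finite refinement exists because only finitely many Borel sets are involved, so no cylinder structure is needed. Everything else is routine.

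This is exactly Arveson's Radon--Nikodym argument for the unital completely positive map $f\mapsto\int f\,dE$ on $C(\Omega_m)$ with Stinespring data $(K,P,V)$. One could alternatively cite it directly, using that the commutant of the extended representation equals $P(\mathcal{B})'$, since $P$ is the spectral measure of the representation.
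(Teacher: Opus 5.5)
Your proposal is correct and follows the same route as the paper: the perturbation $E_{\pm}(B)=V^{*}(I\pm T)P(B)V$ gives one direction, and a Radon--Nikodym derivative in the commutant $P(\mathcal{B}(\Omega_m))'$ gives the other. The differences are matters of detail. You prove the Radon--Nikodym step directly (via the form on the span of the vectors $P(B)Vh$ and a disjoint refinement) where the paper cites it, and you spell out the minimality argument that the paper only asserts. You also work with $T=S-I$ for $E_1$ alone, rather than $\tfrac12(T_1-T_2)$ together with uniqueness of the derivative.
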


\begin{proof}
We give the standard proof sketch, and refer to \cite{MR253059,MR1810813,MR2770378}
for more details.

Suppose first that there is a nonzero self-adjoint $T\in P\left(\mathcal{B}\left(\Omega_{m}\right)\right)'$
with $V^{*}TV=0$. After scaling, assume $\left\Vert T\right\Vert \leq1$.
Then 
\[
E_{\pm}\left(B\right)=V^{*}P\left(B\right)\left(I\pm T\right)V
\]
define POVMs on $\Omega_{m}$. Positivity follows from $I\pm T\geq0$
and the fact that $T$ commutes with $P\left(B\right)$. Normalization
follows from 
\[
E_{\pm}\left(\Omega_{m}\right)=V^{*}\left(I\pm T\right)V=I.
\]
Moreover, $E=\frac{1}{2}\left(E_{+}+E_{-}\right)$. By minimality,
$T\neq0$ makes this decomposition nontrivial. Hence $E$ is not extreme.

Conversely, if $E$ is not extreme, write $E=\frac{1}{2}\left(E_{1}+E_{2}\right)$
with $E_{1}\neq E_{2}$. Since $E_{i}\leq2E$, the Radon-Nikodym theorem
for POVMs gives positive operators $T_{i}\in P\left(\mathcal{B}\left(\Omega_{m}\right)\right)'$
such that $E_{i}\left(B\right)=V^{*}P\left(B\right)T_{i}V$. (See
e.g., \cite{MR932932,MR2014842,MR253059} and also \prettyref{prop:5-1}.)
Then $T=\frac{1}{2}\left(T_{1}-T_{2}\right)$ is self-adjoint, belongs
to the commutant, satisfies $V^{*}TV=0$, and is nonzero by uniqueness
in the Radon-Nikodym theorem. 
\end{proof}

We now return to the range space notation from \prettyref{thm:2-6}
and \prettyref{sec:3}. Thus $E$ is a POVM on $\Omega_{m}$, with
cylinder values 
\[
E_{w}=E\left(\left[w\right]\right),\qquad H_{w}=\overline{ran}\,(E^{1/2}_{w}),
\]
and edge contractions 
\[
C_{wj}:H_{w}\to H_{wj}.
\]

\begin{defn}
\label{def:4-2}A bounded range martingale for $E$ is a family of
self-adjoint operators 
\[
X_{w}\in B\left(H_{w}\right),\qquad w\in\left\{ 0,\ldots,m-1\right\} ^{<\mathbb{N}},
\]
such that 
\[
\sup_{w}\left\Vert X_{w}\right\Vert <\infty
\]
and 
\begin{equation}
X_{w}=\sum^{m-1}_{j=0}C^{*}_{wj}X_{wj}C_{wj}\label{eq:4-1}
\end{equation}
for every finite word $w$. 
\end{defn}

The terminology is motivated by the scalar case. Suppose first that
$E$ is induced by a scalar probability measure $\nu$ on $\Omega_{m}$.
Then each nonzero range space $H_{w}$ is canonically $\mathbb{C}$,
and, whenever $\nu\left(\left[w\right]\right)>0$, the local splitting
operators are multiplication by the conditional probabilities 
\[
p_{w}\left(j\right)=\frac{\nu\left(\left[wj\right]\right)}{\nu\left(\left[w\right]\right)}.
\]
The edge map $C_{wj}:H_{w}\to H_{wj}$ is then multiplication by $\sqrt{p_{w}\left(j\right)}$.
If $\nu\left(\left[wj\right]\right)=0$, the child range space is
zero, and the corresponding term contributes nothing. A choice of
operators $X_{w}\in B\left(H_{w}\right)$ is therefore a choice of
scalars $x_{w}$ indexed by the tree vertices. Equivalently, it determines
a scalar process $\left(M_{n}\right)$ by setting 
\[
M_{n}\left(\omega\right)=x_{w}
\]
for $\omega\in\left[w\right]$, $\left|w\right|=n$. With this identification,
\prettyref{eq:4-1} becomes 
\[
x_{w}=\sum^{m-1}_{j=0}p_{w}\left(j\right)x_{wj},
\]
which is the condition 
\[
M_{n}=\mathbb{E}\left(M_{n+1}\mid\mathcal{F}_{n}\right)
\]
where $\mathcal{F}_{n}$ is the finite $\sigma$-algebra generated
by the cylinders of length $n$. Thus the value at a parent cylinder
is the conditional average of the values on its child cylinders. \prettyref{def:4-2}
is the same martingale relation, but with scalar cylinder values replaced
by operators on the range spaces and with the conditional probabilities
replaced by the edge contractions $C_{wj}$.
\begin{cor}
\label{cor:4-3} Let $X=\left\{ X_{w}\right\} $ be a bounded range
martingale for $E$, and let $h\in H$ be a unit vector. Define the
scalar probability measure 
\[
\nu_{h}\left(B\right)=\left\langle h,E\left(B\right)h\right\rangle ,\qquad B\in\mathcal{B}\left(\Omega_{m}\right).
\]
For each word $w$ with $\nu_{h}\left(\left[w\right]\right)>0$, set
\[
x^{h}_{w}=\frac{\left\langle E^{1/2}_{w}h,X_{w}E^{1/2}_{w}h\right\rangle }{\left\Vert E^{1/2}_{w}h\right\Vert ^{2}}.
\]
On words with $\nu_{h}\left(\left[w\right]\right)=0$, define $x^{h}_{w}=0$.
Let $\mathcal{F}_{n}$ be the finite $\sigma$-algebra generated by
the cylinders of length $n$, and define 
\[
M^{h}_{n}\left(\omega\right)=x^{h}_{w},\qquad\omega\in\left[w\right],\quad\left|w\right|=n.
\]
Then $\left\{ M^{h}_{n}\right\} _{n\geq0}$ is a bounded scalar martingale
with respect to $\left\{ \mathcal{F}_{n}\right\} _{n\geq0}$ and $\nu_{h}$. 
\end{cor}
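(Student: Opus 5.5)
The plan is to verify the martingale identity cylinder by cylinder, using the edge relation \prettyref{eq:3-1} to rewrite the numerator of $x^{h}_{wj}$ in terms of $C_{wj}$, and then summing the defining relation \prettyref{eq:4-1}. Adaptedness is built into the definition: $M^{h}_{n}$ is constant on each cylinder of length $n$, so it is $\mathcal{F}_{n}$-measurable. Boundedness is also immediate. Since $X_{w}$ is self-adjoint with $\|X_{w}\|\leq\sup_{u}\|X_{u}\|=:c$, we have $|x^{h}_{w}|\leq c$ for every $w$, hence $|M^{h}_{n}|\leq c$ for all $n$.

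For the martingale property, $\mathcal{F}_{n}$ is generated by the finite partition into cylinders $[w]$ with $|w|=n$. It therefore suffices to show that
\[
\nu_{h}([w])\,x^{h}_{w}=\sum_{j=0}^{m-1}\nu_{h}([wj])\,x^{h}_{wj}
\]
for every $|w|=n$. Cylinders of $\nu_{h}$-measure zero carry no mass, so they impose no condition. To prove the identity, set
\[
y_{w}=\left\langle E^{1/2}_{w}h,X_{w}E^{1/2}_{w}h\right\rangle .
\]
We have $\nu_{h}([w])=\|E^{1/2}_{w}h\|^{2}$, so $\nu_{h}([w])x^{h}_{w}=y_{w}$ whenever $\nu_{h}([w])>0$. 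This also holds when $\nu_{h}([w])=0$, because then $E^{1/2}_{w}h=0$, which forces $y_{w}=0$. The same reasoning applies to each child $wj$. The identity thus reduces to
\[
y_{w}=\sum_{j}y_{wj}.
\]

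For this, use \prettyref{eq:3-1}: $E^{1/2}_{wj}h=C_{wj}E^{1/2}_{w}h$. This gives
\[
y_{wj}=\left\langle E^{1/2}_{w}h,C^{*}_{wj}X_{wj}C_{wj}E^{1/2}_{w}h\right\rangle .
\]
Summing over $j$ and applying \prettyref{eq:4-1} yields $\sum_{j}y_{wj}=y_{w}$. Equivalently, $\mathbb{E}(M^{h}_{n+1}\mid\mathcal{F}_{n})=M^{h}_{n}$ holds $\nu_{h}$-a.e.

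The only delicate point is the zero-mass convention. We need $y_{wj}=0$ exactly when $\nu_{h}([wj])=0$, which holds because $E^{1/2}_{wj}h=0$ in that case. A parent of positive mass may have null children, but those children contribute $0$ to both sides. So the arbitrary choice $x^{h}_{w}=0$ on null cylinders does not affect the conditional expectation identity, and no further obstacle arises.
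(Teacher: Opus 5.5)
Your proposal is correct and follows essentially the same route as the paper. You rewrite the child numerators via $C_{wj}E^{1/2}_{w}h=E^{1/2}_{wj}h$ from \eqref{eq:3-1}, sum the martingale relation \eqref{eq:4-1} to get $y_{w}=\sum_{j}y_{wj}$, and bound $|x^{h}_{w}|$ by $\sup_{u}\|X_{u}\|$, and your explicit check that null children have $y_{wj}=0$ (so the convention $x^{h}_{wj}=0$ is harmless) makes precise a point the paper leaves implicit.
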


\begin{proof}
It is enough to check the martingale relation on each cylinder $\left[w\right]$
with $\nu_{h}\left(\left[w\right]\right)>0$. By \prettyref{eq:4-1},
\[
\begin{aligned}\left\langle E^{1/2}_{w}h,X_{w}E^{1/2}_{w}h\right\rangle  & =\sum^{m-1}_{j=0}\left\langle C_{wj}E^{1/2}_{w}h,X_{wj}C_{wj}E^{1/2}_{w}h\right\rangle \\
 & =\sum^{m-1}_{j=0}\left\langle E^{1/2}_{wj}h,X_{wj}E^{1/2}_{wj}h\right\rangle .
\end{aligned}
\]
Also 
\[
\nu_{h}\left(\left[w\right]\right)=\sum^{m-1}_{j=0}\nu_{h}\left(\left[wj\right]\right).
\]
Therefore 
\[
x^{h}_{w}=\sum^{m-1}_{j=0}\frac{\nu_{h}\left(\left[wj\right]\right)}{\nu_{h}\left(\left[w\right]\right)}x^{h}_{wj}.
\]
This is exactly the identity 
\[
M^{h}_{n}=\mathbb{E}_{\nu_{h}}\left(M^{h}_{n+1}\mid\mathcal{F}_{n}\right).
\]
Boundedness follows from 
\[
\left|x^{h}_{w}\right|\leq\sup_{u}\left\Vert X_{u}\right\Vert .
\]
\end{proof}

The martingale relation also reflects the commutant of the dilating
PVM in the range space coordinates. An operator commuting with the
cylinder projections is block diagonal at each finite level 
\[
K_{n}=\bigoplus_{\left|w\right|=n}H_{w}.
\]
Compatibility with the direct limit maps then forces the block at
a parent vertex to be obtained from the blocks at its children through
\prettyref{eq:4-1}. The next proposition shows that this gives all
operators in the commutant.
\begin{thm}
\label{thm:4-4} Let $\left(K_{E},P,V\right)$ be the range space
dilation from \prettyref{sec:3}. Self-adjoint operators $T\in B\left(K_{E}\right)$
satisfying 
\[
TP\left(B\right)=P\left(B\right)T,\qquad B\in\mathcal{B}\left(\Omega_{m}\right),
\]
are in one-to-one correspondence with bounded range martingales $\left\{ X_{w}\right\} $.
The correspondence is given by 
\begin{equation}
U^{*}_{n}TU_{n}=\bigoplus_{\left|w\right|=n}X_{w},\qquad n\geq0.\label{eq:4-2}
\end{equation}
\end{thm}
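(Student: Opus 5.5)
The proof has two directions: from a commuting $T$ to a bounded range martingale, and back.

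\emph{From $T$ to a martingale.} Let $T$ be self-adjoint and commute with $P$. For each $n$ set $T_{n}=U^{*}_{n}TU_{n}\in B(K_{n})$. Since $U_{n}$ is an isometry, \prettyref{eq:3-8} gives $U^{*}_{n}P_{w}U_{n}=P^{(n)}_{w}$ for $|w|=n$. Using $TP_{w}=P_{w}T$, we get
\[
T_{n}P^{(n)}_{w}=U^{*}_{n}TP_{w}U_{n}=U^{*}_{n}P_{w}TU_{n}.
\]
To pass $P_{w}$ through $U^{*}_{n}$, take adjoints in \prettyref{eq:3-8}: this gives $U^{*}_{n}P_{w}=P^{(n)}_{w}U^{*}_{n}$, since $P_{w}$ and $P^{(n)}_{w}$ are self-adjoint. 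Hence $T_{n}$ commutes with every $P^{(n)}_{w}$, $|w|=n$. These projections are exactly the coordinate projections onto the summands $H_{w}$ of $K_{n}$, so $T_{n}=\bigoplus_{|w|=n}X_{w}$ with each $X_{w}$ self-adjoint and $\|X_{w}\|\leq\|T\|$.

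The martingale relation comes from $U_{n}=U_{n+1}J_{n}$, which gives $T_{n}=J^{*}_{n}T_{n+1}J_{n}$. Reading off the $w$-block via \prettyref{eq:3-4} gives $X_{w}=\sum_{j}C^{*}_{wj}X_{wj}C_{wj}$, which is \prettyref{eq:4-1}. Injectivity of $T\mapsto\{X_{w}\}$ holds because the blocks determine $\langle U_{n}x,TU_{n}y\rangle$ for all $x,y\in K_{n}$ and all $n$. By polarization and the density of $\bigcup_{n}U_{n}K_{n}$, this determines $T$.

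\emph{From a martingale to $T$.} This is the main step. Given a bounded range martingale, set $X^{(n)}=\bigoplus_{|w|=n}X_{w}$ on $K_{n}$. This is bounded by $M=\sup\|X_{w}\|$, and \prettyref{eq:4-1} says exactly $J^{*}_{n}X^{(n+1)}J_{n}=X^{(n)}$. Define a sesquilinear form on the dense subspace $\bigcup_{n}U_{n}K_{n}$: for $x\in K_{r}$, $y\in K_{s}$ and $n\geq r,s$, put
\[
b(U_{r}x,U_{s}y)=\langle J_{r,n}x,X^{(n)}J_{s,n}y\rangle.
\]
Iterating $J^{*}_{n}X^{(n+1)}J_{n}=X^{(n)}$ shows the value does not depend on $n$. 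It also does not depend on the representatives, by the same argument as in \prettyref{thm:3-3}: if $U_{r}x=U_{r'}x'$, then $J_{r,t}x=J_{r',t}x'$ for large $t$. Well-definedness must be checked at the level of the form, because a single vector $U_{r}x$ cannot be mapped into a fixed $K_{n}$ operator-wise. Since $J_{r,n}$ is an isometry, $|b(\xi,\eta)|\leq M\|\xi\|\|\eta\|$, so $b$ extends to a bounded Hermitian form on $K_{E}$. This yields a self-adjoint $T$ with $\|T\|\leq M$ and $U^{*}_{n}TU_{n}=X^{(n)}$, which is \prettyref{eq:4-2}.

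It remains to show $T$ commutes with $P$. For $|w|\leq n$ and $\xi=U_{n}x$, $\eta=U_{n}y$, use \prettyref{eq:3-8} and the fact that $X^{(n)}$ commutes with $P^{(n)}_{w}$ (it is block diagonal and $P^{(n)}_{w}$ is a sum of block projections). This gives
\[
\langle\xi,TP_{w}\eta\rangle=\langle x,X^{(n)}P^{(n)}_{w}y\rangle=\langle x,P^{(n)}_{w}X^{(n)}y\rangle=\langle P_{w}\xi,T\eta\rangle.
\]
Every vector of $\bigcup_{n}U_{n}K_{n}$ can be pushed to a common large level, so this identity holds on the whole dense subspace, hence $TP_{w}=P_{w}T$ for all cylinders. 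The cylinder projections generate $P(\mathcal{B}(\Omega_{m}))$ as a von Neumann algebra; alternatively, for fixed $\xi,\eta$ the measures $B\mapsto\langle\xi,TP(B)\eta\rangle$ and $B\mapsto\langle P(B)\xi,T\eta\rangle$ agree on cylinders. Either way, $T$ commutes with every $P(B)$.

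Finally, the two constructions are mutually inverse: \prettyref{eq:4-2} determines both $T$ and the martingale uniquely, so the correspondence is bijective.
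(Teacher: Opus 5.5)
Your proposal is correct and follows essentially the same route as the paper. You block-diagonalize $U_n^*TU_n$ using $P_wU_n=U_nP^{(n)}_w$, read off the martingale identity from $U_{n+1}J_n=U_n$, and for the converse build $T$ from the level-independent bounded form $\langle J_{r,n}x,X^{(n)}J_{s,n}y\rangle$ on $\bigcup_n U_nK_n$, checking commutation on cylinders before extending to all Borel sets. Your added details (taking adjoints in \eqref{eq:3-8}, the explicit injectivity argument, and the measure-uniqueness alternative to the monotone class step) are all sound.
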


\begin{proof}
Suppose first that $T=T^{*}$ commutes with $P\left(B\right)$ for
every Borel set $B$. Set 
\[
Y_{n}=U^{*}_{n}TU_{n}\in B\left(K_{n}\right).
\]
If $\left|w\right|=n$, then 
\[
P\left(\left[w\right]\right)U_{n}=U_{n}P^{\left(n\right)}_{w}
\]
by \prettyref{eq:3-8}. Since $T$ commutes with $P\left(\left[w\right]\right)$,
the operator $Y_{n}$ commutes with $P^{\left(n\right)}_{w}$ for
every word $w$ of length $n$. Hence $Y_{n}$ is block diagonal with
respect to 
\[
K_{n}=\bigoplus_{\left|w\right|=n}H_{w}.
\]
Thus there are self-adjoint operators $X_{w}\in B\left(H_{w}\right)$
such that 
\[
Y_{n}=\bigoplus_{\left|w\right|=n}X_{w}.
\]
Clearly 
\[
\sup_{w}\left\Vert X_{w}\right\Vert \leq\left\Vert T\right\Vert .
\]

It remains to check \prettyref{eq:4-1}. Since $U_{n+1}J_{n}=U_{n}$,
we have 
\[
Y_{n}=J^{*}_{n}Y_{n+1}J_{n}.
\]
Using the block form of $Y_{n}$ and $Y_{n+1}$, together with the
definition of $J_{n}$ in \prettyref{eq:3-4}, this identity gives
\[
X_{w}=\sum^{m-1}_{j=0}C^{*}_{wj}X_{wj}C_{wj}
\]
for every word $w$ of length $n$. Hence $\left\{ X_{w}\right\} $
is a bounded range martingale.

Conversely, suppose that $\left\{ X_{w}\right\} $ is a bounded range
martingale. Define 
\[
Y_{n}=\bigoplus_{\left|w\right|=n}X_{w}\in B\left(K_{n}\right).
\]
Then \prettyref{eq:4-1} is equivalent to 
\[
Y_{n}=J^{*}_{n}Y_{n+1}J_{n},\qquad n\geq0.
\]
For vectors $x\in K_{r}$ and $y\in K_{s}$, choose $n\geq\max\left\{ r,s\right\} $
and set 
\[
\gamma\left(U_{r}x,U_{s}y\right)=\left\langle J_{r,n}x,Y_{n}J_{s,n}y\right\rangle .
\]
The preceding compatibility relation shows that this definition is
independent of $n$ and of the representatives. If 
\[
M=\sup_{w}\left\Vert X_{w}\right\Vert ,
\]
then 
\[
\left|\gamma\left(U_{r}x,U_{s}y\right)\right|\leq M\left\Vert x\right\Vert \left\Vert y\right\Vert .
\]
Thus $\gamma$ extends to a bounded sesquilinear form on $K_{E}$.
Hence there is a self-adjoint operator $T\in B\left(K_{E}\right)$
such that 
\[
\gamma\left(\xi,\eta\right)=\left\langle \xi,T\eta\right\rangle ,\qquad\xi,\eta\in K_{E}.
\]
By construction, 
\[
U^{*}_{n}TU_{n}=Y_{n}
\]
for every $n$.

It remains to show that $T$ commutes with $P$. Let $w$ be a finite
word. If $n\geq\left|w\right|$, then $Y_{n}$ commutes with $P^{\left(n\right)}_{w}$,
since $Y_{n}$ is block diagonal at level $n$ and $P^{\left(n\right)}_{w}$
is the projection onto a sum of level $n$ blocks. It follows, by
evaluating matrix coefficients on the dense subspace $\bigcup_{n\geq0}U_{n}K_{n}$,
that 
\[
TP\left(\left[w\right]\right)=P\left(\left[w\right]\right)T.
\]
Since the cylinder sets generate $\mathcal{B}\left(\Omega_{m}\right)$,
a monotone class argument gives 
\[
TP\left(B\right)=P\left(B\right)T
\]
for every Borel set $B\subseteq\Omega_{m}$.

The two constructions are inverse to each other, so the correspondence
is one-to-one. 
\end{proof}

\begin{thm}
\label{thm:4-5} The POVM $E$ is extreme in $\mathcal{P}\left(\Omega_{m},H\right)$
if and only if the only bounded range martingale $\left\{ X_{w}\right\} $
satisfying 
\[
X_{\varnothing}=0
\]
is the zero family. 
\end{thm}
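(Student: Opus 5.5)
The plan is to combine \prettyref{prop:4-1} with \prettyref{thm:4-4}, applied to the range space dilation $\left(K_{E},P,V\right)$. By \prettyref{thm:3-4} this triple is a minimal Naimark dilation of $E$, so \prettyref{prop:4-1} applies to it directly. Hence $E$ is extreme if and only if the only self-adjoint $T\in P\left(\mathcal{B}\left(\Omega_{m}\right)\right)'$ with $V^{*}TV=0$ is $T=0$. What remains is to translate the condition $V^{*}TV=0$ into range martingale language.

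By \prettyref{thm:4-4}, self-adjoint elements $T$ of the commutant correspond bijectively to bounded range martingales $\left\{ X_{w}\right\}$ via \prettyref{eq:4-2}. Take $n=0$ in \prettyref{eq:4-2}. Since $K_{0}=H_{\varnothing}=H$, the only word of length zero is $\varnothing$, and $U_{0}=V$ by \prettyref{eq:3-6}, we get
\[
V^{*}TV=U^{*}_{0}TU_{0}=X_{\varnothing}.
\]
Therefore $V^{*}TV=0$ holds exactly when $X_{\varnothing}=0$. Because the correspondence is one-to-one and linear, $T=0$ corresponds to the zero family; linearity is clear from \prettyref{eq:4-2} and the construction of $T$ through the sesquilinear form $\gamma$. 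It follows that the set $\left\{ T\in P\left(\mathcal{B}\left(\Omega_{m}\right)\right)'_{\mathrm{sa}}:V^{*}TV=0\right\}$ is trivial if and only if every bounded range martingale with $X_{\varnothing}=0$ vanishes identically. This is the claim.

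I expect no real obstacle here, since the argument is essentially bookkeeping. The one point that needs care is that $T=0$ really does correspond to $X_{w}=0$ for every $w$, and conversely that a vanishing family gives $T=0$. This follows from injectivity in \prettyref{thm:4-4}: if all $X_{w}=0$, then $\gamma$ vanishes on the dense subspace $\bigcup_{n}U_{n}K_{n}$, so $T=0$. A second point is that the commutant of the whole PVM $P$ coincides with the commutant of the cylinder projections, which \prettyref{thm:4-4} already handles by a monotone class argument.
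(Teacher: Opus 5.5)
Your proposal is correct and follows the paper's proof: you apply \prettyref{prop:4-1} to the minimal dilation $(K_E,P,V)$ from \prettyref{thm:3-4}, and you use the $n=0$ case of \prettyref{eq:4-2}, with $U_0=V$ and $K_0=H$, to identify $V^{*}TV$ with $X_{\varnothing}$ under the bijection of \prettyref{thm:4-4}. Your extra checks (that $T=0$ corresponds exactly to the zero family, and that the commutant in \prettyref{prop:4-1} is the one used in \prettyref{thm:4-4}) are valid bookkeeping that the paper leaves implicit.
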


\begin{proof}
By \prettyref{thm:4-4}, bounded range martingales correspond to self-adjoint
operators in the commutant of the dilation PVM $P$. Under this correspondence,
\[
X_{\varnothing}=U^{*}_{0}TU_{0}=V^{*}TV,
\]
since $U_{0}=V$ and $K_{0}=H$. Therefore \prettyref{prop:4-1} gives
the result. 
\end{proof}

\begin{rem}
\label{rem:4-6} In the scalar case, \prettyref{thm:4-5} recovers
the familiar fact that the extreme probability measures on $\Omega_{m}$
are the point masses. Indeed, if the measure is not a point mass,
then some cylinder has measure strictly between $0$ and $1$, and
the centered conditional expectations of that cylinder give a nonzero
bounded martingale with initial value zero. If the measure is a point
mass, every nonzero range space lies on one branch, and the recursion
forces all values to agree with the initial value. 

The same criterion also recovers the classical fact that PVMs are
extreme. If $E$ is projection-valued, then $H_{w}=\oplus^{m-1}_{j=0}H_{wj}$,
and \prettyref{eq:4-1} says that $X_{w}$ is the block diagonal operator
with diagonal blocks $X_{w0},\ldots,X_{w,m-1}$. Thus $X_{\varnothing}=0$
forces all first-level blocks to vanish, and induction gives $X_{w}=0$
for every finite word $w$.
\end{rem}

\section{Domination}\label{sec:5}

The same range martingales describe positive operator-valued measures
dominated by $E$. The background is the Radon-Nikodym theory for
completely positive maps, quantum operations, and quantum instruments
\cite{MR932932,MR1608473,MR2014842}. Here that standard domination
theorem is written in the range space coordinates of \prettyref{sec:3}. 

Let $\mathcal{M}_{+}\left(\Omega_{m},H\right)$ denote the cone of
finite positive operator-valued measures on $\Omega_{m}$ with values
in $B\left(H\right)_{+}$. These measures are not assumed to be normalized.
Thus 
\[
\mathcal{P}\left(\Omega_{m},H\right)=\left\{ F\in\mathcal{M}_{+}\left(\Omega_{m},H\right):F\left(\Omega_{m}\right)=I\right\} .
\]
As before, each element of $\mathcal{M}_{+}\left(\Omega_{m},H\right)$
is determined by its cylinder values, by the scalarization and polarization
argument used in the proof of \prettyref{prop:2-2}.

If $F\in\mathcal{M}_{+}\left(\Omega_{m},H\right)$ and $c>0$, we
write 
\[
F\leq cE
\]
when 
\[
0\leq F\left(B\right)\leq cE\left(B\right)
\]
for every Borel set $B\subseteq\Omega_{m}$.

We use the following standard Radon-Nikodym theorem for dominated
operator-valued measures (see e.g., \cite{MR932932,MR2014842,MR253059}).
\begin{prop}
\label{prop:5-1} Let $\left(K,P,V\right)$ be a minimal Naimark dilation
of $E$, and let $c>0$. If $F\in\mathcal{M}_{+}\left(\Omega_{m},H\right)$
satisfies $F\leq cE$, then there is a unique positive operator $T\in B\left(K\right)$
such that 
\[
0\leq T\leq cI_{K},
\]
\[
TP\left(B\right)=P\left(B\right)T,\qquad B\in\mathcal{B}\left(\Omega_{m}\right),
\]
and 
\[
F\left(B\right)=V^{*}P\left(B\right)TV,\qquad B\in\mathcal{B}\left(\Omega_{m}\right).
\]
Conversely, every such operator $T$ defines an element $F\in\mathcal{M}_{+}\left(\Omega_{m},H\right)$
satisfying $F\leq cE$ by this formula.
\end{prop}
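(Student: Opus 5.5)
The argument runs through the range space dilation $(K_E,P,V)$ of \prettyref{sec:3}. We first handle that dilation and then transfer to an arbitrary minimal dilation via unitary equivalence of minimal Naimark dilations. The \emph{converse} is routine. If $0\le T\le cI_K$ commutes with $P$, then $T^{1/2}$ also commutes with $P(B)$, so
\[
V^{*}P(B)TV=(T^{1/2}V)^{*}P(B)(T^{1/2}V)\ge0 .
\]
Similarly, $V^*P(B)(cI-T)V\ge0$, which gives $F(B)\le cE(B)$. Countable additivity in the weak operator topology is inherited from $P$.

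For \emph{existence}, we work on $K_E$ and build $T$ as a bounded range martingale using \prettyref{thm:4-4}. On a cylinder, $0\le F_w\le cE_w$. Applying \prettyref{lem:2-1} to $F_w/c\le E_w$ gives a unique $X_w\in B(H_w)$ with $0\le X_w\le cI_{H_w}$ and
\[
F_w=E_w^{1/2}X_wE_w^{1/2}.
\]
Next we check the martingale relation \prettyref{eq:4-1}. For $h,k\in H$, we use $C_{wj}E_w^{1/2}h=E_{wj}^{1/2}h$ and additivity $F_w=\sum_jF_{wj}$:
\[
\langle E_w^{1/2}h,\textstyle\sum_jC_{wj}^*X_{wj}C_{wj}E_w^{1/2}k\rangle=\sum_j\langle h,F_{wj}k\rangle=\langle h,F_wk\rangle=\langle E_w^{1/2}h,X_wE_w^{1/2}k\rangle .
\]
Density of $ran(E_w^{1/2})$ in $H_w$ then yields \prettyref{eq:4-1}, and $\sup\|X_w\|\le c$. \prettyref{thm:4-4} produces a self-adjoint $T$ in the commutant with $U_n^*TU_n=\bigoplus_{|w|=n}X_w$.

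Positivity and the bound $0\le T\le cI$ are checked on the dense subspace $\bigcup U_nK_n$. For $\xi=U_rx$, we have $\langle\xi,T\xi\rangle=\langle J_{r,n}x,Y_nJ_{r,n}x\rangle$, and $0\le Y_n\le cI$ blockwise; the bounds then extend by continuity. For the representation formula, by \prettyref{eq:3-8} and \prettyref{lem:3-2},
\[
\langle Vh,P([w])TVk\rangle=\langle J_{0,n}h,P^{(n)}_wY_nJ_{0,n}k\rangle=\langle E_w^{1/2}h,X_wE_w^{1/2}k\rangle=\langle h,F_wk\rangle .
\]
Here we used $P([w])T=TP([w])$ together with the fact that $TP([w])$ is evaluated through $U_n$. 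Both $B\mapsto V^*P(B)TV$ and $F$ are positive operator-valued measures agreeing on cylinders, so they agree on all Borel sets by the uniqueness noted after \prettyref{prop:5-1}'s setup.

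The main obstacle is \emph{uniqueness}, which rests on minimality. Suppose $T,T'$ both work. Then $S=T-T'$ is self-adjoint, lies in the commutant, and satisfies $V^*P(B)SV=0$ for all $B$. For $B,C$ Borel and $h,k\in H$,
\[
\langle P(B)Vh,SP(C)Vk\rangle=\langle h,V^*P(B\cap C)SVk\rangle=0 ,
\]
using commutation and multiplicativity of the PVM. By minimality, the vectors $P(B)Vh$ span a dense subspace, so $S=0$. Finally, the statement is for an arbitrary minimal dilation $(K,P,V)$. We either run the uniqueness/converse arguments directly there, since they never used the specific model, or obtain existence by transporting $T$ through the unitary $W:K_E\to K$ with $WP_E(B)=P(B)W$ and $WV_E=V$. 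This unitary is defined on spanning vectors by $P_E(B)V_Eh\mapsto P(B)Vh$, and it is isometric because both dilations compress to $E$.
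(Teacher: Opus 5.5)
Your proof is correct, but it takes a different route from the paper, which gives no proof of \prettyref{prop:5-1} at all. The paper cites the standard Radon--Nikodym theorem for completely positive maps and POVMs. Then, in \prettyref{thm:5-2}, it pushes the resulting $T$ through \prettyref{thm:4-4} to extract $\{X_w\}$ and derive $F([w])=E_w^{1/2}X_wE_w^{1/2}$.

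You run the logic in reverse:
\begin{itemize}
\item You define $X_w$ directly by \prettyref{lem:2-1} applied to $F_w/c\le E_w$.
\item You get the martingale identity from finite additivity $F_w=\sum_j F_{wj}$ and density of $ran(E_w^{1/2})$.
\item Only then do you assemble $T$ via the converse half of \prettyref{thm:4-4}.
\end{itemize}
The positivity, the bound, and the cylinder computation coincide with the converse half of \prettyref{thm:5-2}. Uniqueness is the usual minimality argument. The general case follows by transport through the unitary equivalence $W$ of minimal dilations, which uses \prettyref{thm:3-4}.

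There is no circularity. \prettyref{thm:3-4} and \prettyref{thm:4-4} are proved without \prettyref{prop:5-1}; only \prettyref{prop:4-1} uses it, and you never invoke that.

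What your route gains: the Radon--Nikodym theorem on the tree becomes a consequence of the local coordinates. Formula \prettyref{eq:5-2} is then the definition of the derivative rather than a computed output, so \prettyref{thm:5-2} becomes self-contained. What the paper's citation gains: brevity and generality, since the cited theorem holds for arbitrary measurable spaces and CP maps. Your argument uses the cylinder structure and needs the extra transport step.

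Two cosmetic points:
\begin{itemize}
\item In the cylinder computation you only need self-adjointness of $P([w])$ and \prettyref{eq:3-8}, not commutation with $T$.
\item The fact that elements of $\mathcal{M}_{+}(\Omega_m,H)$ are determined by their cylinder values is recorded just before \prettyref{prop:5-1}, not after it.
\end{itemize}
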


We now translate this standard result into the range spaces of $E$.
A positive range martingale means a bounded range martingale $\left\{ X_{w}\right\} $
from \prettyref{def:4-2} such that $X_{w}\geq0$ for every finite
word $w$.
\begin{thm}
\label{thm:5-2} Let $c>0$. The assignment 
\[
F\mapsto\left\{ X_{w}\right\} 
\]
is a one-to-one correspondence between measures $F\in\mathcal{M}_{+}\left(\Omega_{m},H\right)$
satisfying $F\leq cE$ and positive range martingales $\left\{ X_{w}\right\} $
satisfying 
\begin{equation}
0\leq X_{w}\leq cI_{H_{w}}\label{eq:5-1}
\end{equation}
for every finite word $w$. The correspondence is characterized by
\begin{equation}
F\left(\left[w\right]\right)=E^{1/2}_{w}X_{w}E^{1/2}_{w}\label{eq:5-2}
\end{equation}
for every finite word $w$. In \prettyref{eq:5-2}, $X_{w}$ is extended
by zero on $H^{\perp}_{w}$. 
\end{thm}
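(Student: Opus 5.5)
The plan is to compose the standard Radon--Nikodym theorem (\prettyref{prop:5-1}) with the commutant description of \prettyref{thm:4-4}, applied to the range space dilation $\left(K_{E},P,V\right)$, which is minimal by \prettyref{thm:3-4}. Given $F\leq cE$, \prettyref{prop:5-1} produces a unique $T$ in the commutant of $P$ with $0\leq T\leq cI_{K_{E}}$ and $F\left(B\right)=V^{*}P\left(B\right)TV$. Since $T$ is self-adjoint and commutes with $P$, \prettyref{thm:4-4} attaches to it a bounded range martingale $\left\{ X_{w}\right\} $ with $U_{n}^{*}TU_{n}=\bigoplus_{\left|w\right|=n}X_{w}$. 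Compressing $0\leq T\leq cI$ by the isometry $U_{n}$ gives $0\leq\bigoplus X_{w}\leq cI_{K_{n}}$, hence \prettyref{eq:5-1} blockwise. Conversely, a martingale satisfying \prettyref{eq:5-1} yields, by \prettyref{thm:4-4}, a self-adjoint $T$ in the commutant; I need $0\leq T\leq cI$, and then the converse half of \prettyref{prop:5-1} gives $F\leq cE$.

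To pass the bounds from the blocks to $T$, I will use the sesquilinear form from the proof of \prettyref{thm:4-4}. For $\xi=U_{r}x$ one has $\left\langle \xi,T\xi\right\rangle =\left\langle J_{r,n}x,Y_{n}J_{r,n}x\right\rangle $ with $Y_{n}=\bigoplus X_{w}$, and this lies in $\left[0,c\left\Vert x\right\Vert ^{2}\right]$ because $J_{r,n}$ is an isometry. Since $\bigcup_{n}U_{n}K_{n}$ is dense in $K_{E}$ and $T$ is bounded, the inequalities $0\leq T\leq cI$ extend to all of $K_{E}$.

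Next I will verify formula \prettyref{eq:5-2}. Take $\left|w\right|=n$. Since $T$ commutes with $P$, we have $P\left(\left[w\right]\right)T=P_{w}TP_{w}$. Using \prettyref{eq:3-8} together with $V=U_{n}J_{0,n}$, we get $P_{w}V=U_{n}P_{w}^{\left(n\right)}J_{0,n}$. Therefore
\[
\left\langle h,F\left(\left[w\right]\right)k\right\rangle =\left\langle P_{w}^{\left(n\right)}J_{0,n}h,Y_{n}P_{w}^{\left(n\right)}J_{0,n}k\right\rangle =\left\langle E_{w}^{1/2}h,X_{w}E_{w}^{1/2}k\right\rangle ,
\]
where the last step uses \prettyref{lem:3-2}. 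This is exactly \prettyref{eq:5-2}. The same computation applied to the $T$ built from a given martingale shows that the resulting $F$ satisfies \prettyref{eq:5-2} as well.

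Finally, I will show that \prettyref{eq:5-2} characterizes the correspondence and that the correspondence is bijective. If two martingales give the same $F$, then $E_{w}^{1/2}\left(X_{w}-X_{w}'\right)E_{w}^{1/2}=0$ for every $w$. Density of $ran\,(E_{w}^{1/2})$ in $H_{w}$, as in \prettyref{lem:2-1}, then forces $X_{w}=X'_{w}$. In the other direction, $F$ is determined by its cylinder values, so distinct measures give distinct families. The two maps are therefore mutually inverse. The step I expect to need the most care is passing the order bounds from the finite-level blocks to $T$ on the direct limit. This is where the positivity of $Y_{n}$ and the density argument have to be written out explicitly, since \prettyref{thm:4-4} only records self-adjointness.
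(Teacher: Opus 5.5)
Your proposal is correct and follows essentially the same route as the paper. Both arguments apply \prettyref{prop:5-1} to the minimal range space dilation and use \prettyref{thm:4-4} to pass between $T$ and $\left\{ X_{w}\right\}$. They also share the compression by $U_{n}$ for the bounds, the quadratic form on the dense subspace $\bigcup_{n}U_{n}K_{n}$ to get $0\leq T\leq cI_{K_{E}}$, the cylinder computation via \prettyref{lem:3-2}, and the density argument of \prettyref{lem:2-1} for uniqueness. The only cosmetic difference is that you write $P\left(\left[w\right]\right)T=P_{w}TP_{w}$ in the cylinder computation, whereas the paper moves $P\left(\left[w\right]\right)$ onto $Vh$ and uses the block-diagonal form of $U_{n}^{*}TU_{n}$ directly.
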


\begin{proof}
Suppose first that $F\leq cE$. By \prettyref{prop:5-1}, there is
a unique positive operator $T\in B\left(K_{E}\right)$ such that 
\[
0\leq T\leq cI_{K_{E}},
\]
$T$ commutes with $P\left(B\right)$ for every Borel set $B$, and
\[
F\left(B\right)=V^{*}P\left(B\right)TV,\qquad B\in\mathcal{B}\left(\Omega_{m}\right).
\]
By \prettyref{thm:4-4}, $T$ determines a bounded range martingale
$\left\{ X_{w}\right\} $ through 
\[
U^{*}_{n}TU_{n}=\bigoplus_{\left|w\right|=n}X_{w},\qquad n\geq0.
\]
Since $0\leq T\leq cI_{K_{E}}$, we have 
\[
0\leq U^{*}_{n}TU_{n}\leq cI_{K_{n}}.
\]
Therefore each block satisfies 
\[
0\leq X_{w}\leq cI_{H_{w}}.
\]

It remains to prove \prettyref{eq:5-2}. Let $w$ be a word of length
$n$. For $h,k\in H$, using \prettyref{eq:3-8}, \prettyref{lem:3-2},
and the block form of $U^{*}_{n}TU_{n}$, we get 
\[
\begin{aligned}\left\langle h,F\left(\left[w\right]\right)k\right\rangle  & =\left\langle Vh,P\left(\left[w\right]\right)TVk\right\rangle \\
 & =\left\langle P\left(\left[w\right]\right)Vh,TVk\right\rangle \\
 & =\left\langle U_{n}P^{\left(n\right)}_{w}J_{0,n}h,TU_{n}J_{0,n}k\right\rangle \\
 & =\left\langle P^{\left(n\right)}_{w}J_{0,n}h,U^{*}_{n}TU_{n}J_{0,n}k\right\rangle \\
 & =\left\langle E^{1/2}_{w}h,X_{w}E^{1/2}_{w}k\right\rangle \\
 & =\left\langle h,E^{1/2}_{w}X_{w}E^{1/2}_{w}k\right\rangle .
\end{aligned}
\]
Thus \prettyref{eq:5-2} holds.

Conversely, suppose that $\left\{ X_{w}\right\} $ is a positive range
martingale satisfying \prettyref{eq:5-1}. By \prettyref{thm:4-4},
there is a self-adjoint operator $T\in B\left(K_{E}\right)$ commuting
with $P\left(B\right)$ for every Borel set $B$, and satisfying 
\[
U^{*}_{n}TU_{n}=\bigoplus_{\left|w\right|=n}X_{w},\qquad n\geq0.
\]
We claim that 
\[
0\leq T\leq cI_{K_{E}}.
\]
Indeed, let $\xi=U_{r}x$ with $x\in K_{r}$. Choose $n\geq r$. Then
\[
\left\langle \xi,T\xi\right\rangle =\left\langle J_{r,n}x,\left(\bigoplus_{\left|w\right|=n}X_{w}\right)J_{r,n}x\right\rangle \geq0.
\]
Similarly, 
\[
\left\langle \xi,\left(cI_{K_{E}}-T\right)\xi\right\rangle =\left\langle J_{r,n}x,\left(\bigoplus_{\left|w\right|=n}\left(cI_{H_{w}}-X_{w}\right)\right)J_{r,n}x\right\rangle \geq0.
\]
Since $\bigcup_{r\geq0}U_{r}K_{r}$ is dense in $K_{E}$, the claim
follows.

Define 
\[
F\left(B\right)=V^{*}P\left(B\right)TV,\qquad B\in\mathcal{B}\left(\Omega_{m}\right).
\]
Since $T$ is positive and commutes with $P\left(B\right)$, $F$
is a finite positive operator-valued measure. Also, 
\[
0\leq F\left(B\right)\leq cV^{*}P\left(B\right)V=cE\left(B\right),
\]
so $F\leq cE$.

The same calculation as in the first half of the proof gives 
\[
F\left(\left[w\right]\right)=E^{1/2}_{w}X_{w}E^{1/2}_{w}
\]
for every finite word $w$.

Finally, the uniqueness of $\left\{ X_{w}\right\} $ follows from
\prettyref{eq:5-2}. Indeed, if another positive family $\left\{ Y_{w}\right\} $
satisfies the same cylinder formula, then 
\[
E^{1/2}_{w}X_{w}E^{1/2}_{w}=E^{1/2}_{w}Y_{w}E^{1/2}_{w}
\]
for every $w$. Since $X_{w}$ and $Y_{w}$ act on $H_{w}=\overline{ran}\,(E^{1/2}_{w})$,
the density argument from \prettyref{lem:2-1} gives $X_{w}=Y_{w}$. 
\end{proof}

The root value of the martingale is the total mass of the dominated
measure.
\begin{cor}
\label{cor:5-3} Under the correspondence in \prettyref{thm:5-2},
\[
X_{\varnothing}=F\left(\Omega_{m}\right).
\]
In particular, the measures $F\in\mathcal{M}_{+}\left(\Omega_{m},H\right)$
satisfying $F\leq cE$ and having fixed total mass $A\in B\left(H\right)_{+}$
are in one-to-one correspondence with the positive range martingales
satisfying 
\[
0\leq X_{w}\leq cI_{H_{w}}
\]
for every $w$, and 
\[
X_{\varnothing}=A.
\]
\end{cor}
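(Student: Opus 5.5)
The corollary follows from \prettyref{thm:5-2}, using only the formula \prettyref{eq:5-2} at the root and the fact that $E_{\varnothing}=I$. I first identify $X_{\varnothing}$, and then restrict the bijection of \prettyref{thm:5-2} to fibres of fixed total mass.

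For the first claim, I apply \prettyref{eq:5-2} with $w=\varnothing$. Since $[\varnothing]=\Omega_{m}$ and $E_{\varnothing}=E(\Omega_{m})=I$, we have $E^{1/2}_{\varnothing}=I$ and $H_{\varnothing}=\overline{ran}\,(I)=H$. Thus $X_{\varnothing}$ is an operator on all of $H$, and the extension by zero is vacuous. The cylinder formula then reads
\[
F(\Omega_{m})=I\,X_{\varnothing}\,I=X_{\varnothing}.
\]
Equivalently, by \prettyref{eq:4-2} with $n=0$, $X_{\varnothing}=U_{0}^{*}TU_{0}=V^{*}TV=V^{*}P(\Omega_{m})TV=F(\Omega_{m})$, where $T$ is the Radon--Nikodym operator from \prettyref{prop:5-1}.

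For the second claim, \prettyref{thm:5-2} already gives a bijection between all $F\le cE$ and all positive range martingales with $0\le X_{w}\le cI_{H_{w}}$. Under this bijection, the condition $F(\Omega_{m})=A$ on the measure side corresponds exactly to $X_{\varnothing}=A$ on the martingale side, by the identity just proved. Restricting a bijection to corresponding subsets gives a bijection, and this is the assertion.

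There is no real obstacle. The only point needing care is that the root range space is the full space $H$, so that $X_{\varnothing}$ is literally equal to $F(\Omega_{m})$ rather than merely its compression to a subspace. This holds precisely because $E$ is normalized, $E(\Omega_{m})=I$.
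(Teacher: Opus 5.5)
Your proposal is correct and follows the paper's proof: evaluate \eqref{eq:5-2} at $w=\varnothing$, using $E_{\varnothing}=I$ and $H_{\varnothing}=H$, to get $X_{\varnothing}=F(\Omega_{m})$. Then restrict the bijection of Theorem~\ref{thm:5-2} to the subsets where $F(\Omega_{m})=A$ and $X_{\varnothing}=A$; your extra check via $X_{\varnothing}=V^{*}TV$ is a valid alternative route to the same identity.
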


\begin{proof}
Since $E_{\varnothing}=I$ and $H_{\varnothing}=H$, \prettyref{eq:5-2}
with $w=\varnothing$ gives 
\[
F\left(\Omega_{m}\right)=X_{\varnothing}.
\]
The remaining statement follows from \prettyref{thm:5-2}. 
\end{proof}

The preceding theorem also describes convex decompositions of $E$.
This gives another way to view the extremality criterion from \prettyref{sec:4}.
\begin{cor}
\label{cor:5-4} Convex decompositions 
\[
E=\frac{1}{2}\left(F+G\right),
\]
where $F$ and $G$ are POVMs on $\Omega_{m}$ with values in $B\left(H\right)$,
are in one-to-one correspondence with positive range martingales $\left\{ X_{w}\right\} $
satisfying 
\begin{equation}
0\leq X_{w}\leq2I_{H_{w}}\label{eq:5-3}
\end{equation}
for every finite word $w$, and 
\begin{equation}
X_{\varnothing}=I.\label{eq:5-4}
\end{equation}
The correspondence is given on cylinders by 
\begin{equation}
F\left(\left[w\right]\right)=E^{1/2}_{w}X_{w}E^{1/2}_{w}\label{eq:5-5}
\end{equation}
and 
\begin{equation}
G\left(\left[w\right]\right)=E^{1/2}_{w}\left(2I_{H_{w}}-X_{w}\right)E^{1/2}_{w}.\label{eq:5-6}
\end{equation}
\end{cor}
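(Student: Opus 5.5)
The plan is to deduce the corollary from \prettyref{thm:5-2} with $c=2$ and from \prettyref{cor:5-3}, and then to treat $G$ as the complementary measure $2E-F$.

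\textbf{Forward direction.} Start from a convex decomposition $E=\frac{1}{2}(F+G)$ with $F,G\in\mathcal{P}(\Omega_{m},H)$. Positivity of $G$ gives $F(B)=2E(B)-G(B)\leq2E(B)$ for every Borel set $B$, so $F\leq2E$ as an element of $\mathcal{M}_{+}(\Omega_{m},H)$. \prettyref{thm:5-2} then produces a unique positive range martingale $\{X_{w}\}$ with $0\leq X_{w}\leq2I_{H_{w}}$ and with \prettyref{eq:5-5} on cylinders. By \prettyref{cor:5-3}, $X_{\varnothing}=F(\Omega_{m})=I$, because $F$ is normalized. For \prettyref{eq:5-6}, combine $G(\left[w\right])=2E_{w}-F(\left[w\right])$ with $E_{w}=E^{1/2}_{w}I_{H_{w}}E^{1/2}_{w}$, where $I_{H_{w}}$ is extended by zero. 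This is valid because $E^{1/2}_{w}$ vanishes on $H^{\perp}_{w}$ and maps into $H_{w}$, so $E^{1/2}_{w}P_{H_{w}}E^{1/2}_{w}=E_{w}$.

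\textbf{Converse.} Start from a positive range martingale satisfying \prettyref{eq:5-3} and \prettyref{eq:5-4}. \prettyref{thm:5-2} with $c=2$ gives $F\in\mathcal{M}_{+}(\Omega_{m},H)$ with $F\leq2E$ and cylinder values \prettyref{eq:5-5}. By \prettyref{cor:5-3}, $F(\Omega_{m})=X_{\varnothing}=I$, so $F$ is a POVM. Now set $G=2E-F$. Since $F\leq2E$, $G$ is positive on every Borel set. It is countably additive in the weak operator topology as a difference of two such measures. Its total mass is $G(\Omega_{m})=2I-I=I$. So $G$ is a POVM, $E=\frac{1}{2}(F+G)$, and the cylinder computation above gives \prettyref{eq:5-6}.

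\textbf{Bijectivity.} The two maps are mutually inverse. A decomposition is determined by $F$, since $G=2E-F$, and \prettyref{thm:5-2} makes $F\leftrightarrow\{X_{w}\}$ one-to-one. The uniqueness statement in \prettyref{thm:5-2}, applied to \prettyref{eq:5-5}, shows the martingale attached to a given $F$ is unique. Equivalently, the decomposition is recovered from cylinder values by the uniqueness of POVM extension noted at the start of \prettyref{sec:5}.

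\textbf{Main difficulty.} I expect no step to be hard. The only point needing care is the bookkeeping of extension by zero, namely $2E_{w}-E^{1/2}_{w}X_{w}E^{1/2}_{w}=E^{1/2}_{w}(2I_{H_{w}}-X_{w})E^{1/2}_{w}$, and confirming that this uses no property of $X_{w}$ on $H^{\perp}_{w}$.
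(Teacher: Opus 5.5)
Your proposal is correct and follows the paper's argument: both directions rest on \prettyref{thm:5-2} with $c=2$ and on \prettyref{cor:5-3}, and the cylinder formula for $G$ comes from $2E_{w}=E^{1/2}_{w}(2I_{H_{w}})E^{1/2}_{w}$. The only difference is in the converse. You set $G=2E-F$ directly and read off positivity from $F\leq2E$, whereas the paper checks that $\{2I_{H_{w}}-X_{w}\}$ is again a positive range martingale (via $\sum_{j}C^{*}_{wj}C_{wj}=I_{H_{w}}$), applies \prettyref{thm:5-2} to it, and then uses uniqueness from cylinder values to get $F+G=2E$; your version is a slightly shorter route to the same conclusion.
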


\begin{proof}
Suppose first that 
\[
E=\frac{1}{2}\left(F+G\right)
\]
with $F$ and $G$ POVMs. Then 
\[
F\leq2E.
\]
By \prettyref{thm:5-2}, $F$ corresponds to a positive range martingale
$\left\{ X_{w}\right\} $ satisfying \prettyref{eq:5-3}. Since $F\left(\Omega_{m}\right)=I$,
\prettyref{cor:5-3} gives \prettyref{eq:5-4}. Formula \prettyref{eq:5-5}
is just \prettyref{eq:5-2}. Since $F+G=2E$, \prettyref{eq:5-6}
follows from 
\[
2E_{w}=E^{1/2}_{w}\left(2I_{H_{w}}\right)E^{1/2}_{w}.
\]

Conversely, suppose that $\left\{ X_{w}\right\} $ is a positive range
martingale satisfying \prettyref{eq:5-3} and \prettyref{eq:5-4}.
By \prettyref{thm:5-2}, it defines a measure $F\in\mathcal{M}_{+}\left(\Omega_{m},H\right)$
satisfying $F\leq2E$ through \prettyref{eq:5-5}. By \prettyref{cor:5-3},
$F\left(\Omega_{m}\right)=I$, so $F$ is a POVM.

The family $\left\{ 2I_{H_{w}}-X_{w}\right\} $ is also a positive
range martingale. Indeed, 
\[
2I_{H_{w}}-X_{w}=\sum^{m-1}_{j=0}C^{*}_{wj}\left(2I_{H_{wj}}-X_{wj}\right)C_{wj},
\]
because 
\[
I_{H_{w}}=\sum^{m-1}_{j=0}C^{*}_{wj}C_{wj}.
\]
By \prettyref{thm:5-2}, it defines a measure $G\in\mathcal{M}_{+}\left(\Omega_{m},H\right)$
such that $G\leq2E$ by \prettyref{eq:5-6}. Since 
\[
G\left(\Omega_{m}\right)=2I-X_{\varnothing}=I,
\]
the measure $G$ is a POVM. Adding \prettyref{eq:5-5} and \prettyref{eq:5-6}
gives 
\[
F\left(\left[w\right]\right)+G\left(\left[w\right]\right)=2E_{w}
\]
for every finite word $w$. Since finite positive operator-valued
measures on $\Omega_{m}$ are determined by their cylinder values,
\[
F+G=2E.
\]
Thus 
\[
E=\frac{1}{2}\left(F+G\right).
\]
The two constructions are inverse to each other. 
\end{proof}

\begin{rem}
\label{rem:5-5} In this language, $E$ is extreme if and only if
the only positive range martingale satisfying 
\[
0\leq X_{w}\leq2I_{H_{w}},\qquad X_{\varnothing}=I,
\]
is the identity family 
\[
X_{w}=I_{H_{w}},\qquad w\in\left\{ 0,\ldots,m-1\right\} ^{<\mathbb{N}}.
\]
This is equivalent to \prettyref{thm:4-5}. Indeed, if such a positive
range martingale $\left\{ X_{w}\right\} $ is not the identity family,
then 
\[
Y_{w}=X_{w}-I_{H_{w}}
\]
is a nonzero bounded self-adjoint range martingale with $Y_{\varnothing}=0$.
Conversely, if $\left\{ Y_{w}\right\} $ is a nonzero bounded self-adjoint
range martingale with $Y_{\varnothing}=0$, then for sufficiently
small $t>0$, 
\[
X_{w}=I_{H_{w}}+tY_{w}
\]
satisfies 
\[
0\leq X_{w}\leq2I_{H_{w}},\qquad X_{\varnothing}=I,
\]
and is not the identity family. 
\end{rem}

\section{Doob transform}\label{sec:6}

The positive range martingales from \prettyref{sec:5} give a change
of measure calculus for tree POVMs. In the scalar case this is the
classical change of measure by a positive martingale on a filtered
probability space, and on a Markov tree it gives the usual Doob transform
of the transition probabilities \cite{MR109961,MR133152}. In the
present setting the same idea changes not only the cylinder values,
but also the local splitting operators. The operator-valued side is
related to Bayes-type and Radon-Nikodym formulas for quantum random
variables, completely positive maps, and quantum operations \cite{MR2953266,MR932932,MR2014842}.

Throughout this section $E$ is a POVM on $\Omega_{m}$, with cylinder
values $E_{w}$, range spaces 
\[
H_{w}=\overline{ran}\,(E^{1/2}_{w}),
\]
and edge contractions $C_{wj}:H_{w}\to H_{wj}$ as in \prettyref{sec:3}.
\begin{defn}
A positive range martingale $X=\left\{ X_{w}\right\} $ for $E$ is
called strictly positive if there are constants $0<a\leq b<\infty$
such that 
\[
aI_{H_{w}}\leq X_{w}\leq bI_{H_{w}}
\]
for every finite word $w$. It is called normalized if 
\[
X_{\varnothing}=I_{H}.
\]
\end{defn}

Indeed, strictly positive normalized range martingales for $E$ are
in one-to-one correspondence with POVMs boundedly equivalent to $E$.
\begin{prop}
\label{prop:6-2} There is a one-to-one correspondence 
\begin{multline*}
\left\{ \begin{array}{c}
\text{strictly positive normalized}\\
\text{range martingales }X=\left\{ X_{w}\right\} _{w}\text{ for }E
\end{array}\right\} \\
\Longleftrightarrow\left\{ \begin{array}{c}
\text{POVMs }F\text{ on }\Omega_{m}\text{ such that}\\
aE\leq F\leq bE\text{ for some }0<a\leq b<\infty
\end{array}\right\} .
\end{multline*}
The correspondence is given on cylinders by 
\begin{equation}
F\left(\left[w\right]\right)=E^{1/2}_{w}X_{w}E^{1/2}_{w}.\label{eq:6-1-1}
\end{equation}
 
\end{prop}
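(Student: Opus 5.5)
The plan is to derive this from \prettyref{thm:5-2} and \prettyref{cor:5-3}. The extra work is to show that the two-sided bounds $aE\leq F$ and $aI_{H_w}\leq X_w$ correspond under \eqref{eq:6-1-1}. The upper bounds already match by \prettyref{thm:5-2}, and normalization matches by \prettyref{cor:5-3}.

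\emph{From $F$ to $X$.} Let $F$ be a POVM with $aE\leq F\leq bE$. By \prettyref{thm:5-2} with $c=b$ there is a unique positive range martingale $\{X_w\}$ with $0\leq X_w\leq bI_{H_w}$ and $F([w])=E_w^{1/2}X_wE_w^{1/2}$. By \prettyref{cor:5-3}, $X_\varnothing=F(\Omega_m)=I$. For the lower bound I use $F([w])-aE_w\geq 0$, which reads
\[
E_w^{1/2}(X_w-aI_{H_w})E_w^{1/2}\geq 0 .
\]
Testing against vectors $h\in H$ gives $\langle E_w^{1/2}h,(X_w-aI_{H_w})E_w^{1/2}h\rangle\geq0$, so the quadratic form of $X_w-aI_{H_w}$ is nonnegative on the dense subspace $\mathrm{ran}(E_w^{1/2})$ of $H_w$. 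Since $X_w-aI_{H_w}$ is bounded, its form is continuous, so $X_w\geq aI_{H_w}$ on all of $H_w$.

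\emph{From $X$ to $F$.} Let $X$ be strictly positive and normalized, with $aI_{H_w}\leq X_w\leq bI_{H_w}$. \prettyref{thm:5-2} with $c=b$ gives $F\leq bE$ with the cylinder formula \eqref{eq:6-1-1}, and \prettyref{cor:5-3} gives $F(\Omega_m)=X_\varnothing=I$, so $F$ is a POVM.

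The main obstacle is the lower bound $aE(B)\leq F(B)$ for \emph{all} Borel sets $B$, not only cylinders. On a cylinder it is immediate: $F([w])-aE_w=E_w^{1/2}(X_w-aI_{H_w})E_w^{1/2}\geq0$. To pass to all Borel sets I will use the shifted family $\{X_w-aI_{H_w}\}$. It satisfies $0\leq X_w-aI_{H_w}\leq (b-a)I_{H_w}$, and it is a range martingale because $I_{H_w}=\sum_j C_{wj}^*C_{wj}$, as in the proof of \prettyref{cor:5-4}. \prettyref{thm:5-2} then produces a positive measure $G$ with $G([w])=F([w])-aE_w$ for every $w$. Finite positive operator-valued measures are determined by their cylinder values, so $G=F-aE$ on every Borel set, and hence $F-aE\geq0$. (Alternatively, if $T$ is the commutant operator attached to $X$ by \prettyref{thm:4-4}, the density argument in the proof of \prettyref{thm:5-2} gives $T\geq aI$, and then $F(B)-aE(B)=V^*P(B)(T-aI)V\geq0$.)

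\emph{Bijectivity.} Both directions go through the single bijection of \prettyref{thm:5-2}, taken with $c=b$. The choice of $b$ does not matter, because that correspondence is characterized by \eqref{eq:6-1-1}, which does not involve $c$. The two maps are therefore mutually inverse, and uniqueness follows from the density argument of \prettyref{lem:2-1}.
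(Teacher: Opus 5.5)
Your proposal is correct and follows essentially the paper's proof. Both directions go through \prettyref{thm:5-2} and \prettyref{cor:5-3}. The lower bound $aE\le F$ on all Borel sets comes, as in the paper, from applying \prettyref{thm:5-2} to the shifted positive range martingale $\{X_w-aI_{H_w}\}$, and bijectivity comes from the uniqueness in \prettyref{thm:5-2}. The one cosmetic difference is in the direction from $F$ to $X$: you prove $X_w\ge aI_{H_w}$ directly from density of $\mathrm{ran}(E_w^{1/2})$, while the paper applies \prettyref{thm:5-2} to $F-aE$ and uses uniqueness, which amounts to the same density argument.
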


\begin{proof}
Let $X=\left\{ X_{w}\right\} $ be a strictly positive normalized
range martingale. Since $X$ is bounded and positive, \prettyref{thm:5-2}
gives a finite positive operator-valued measure $E^{X}$ whose cylinder
values are 
\begin{equation}
E^{X}_{w}=E^{1/2}_{w}X_{w}E^{1/2}_{w}.\label{eq:6-1}
\end{equation}
Since $X_{\varnothing}=I_{H}$, \prettyref{cor:5-3} gives $E^{X}\left(\Omega_{m}\right)=I_{H}$.
Thus $E^{X}$ is a POVM on $\Omega_{m}$. Moreover, 
\[
aE\leq E^{X}\leq bE.
\]
Indeed, the families 
\[
\left\{ X_{w}-aI_{H_{w}}\right\} _{w}\qquad\text{and}\qquad\left\{ bI_{H_{w}}-X_{w}\right\} _{w}
\]
are positive range martingales, since the identity family 
\[
\left\{ I_{H_{w}}\right\} _{w}
\]
is a range martingale. By \prettyref{thm:5-2}, they define the positive
measures 
\[
E^{X}-aE\qquad\text{and}\qquad bE-E^{X},
\]
respectively. Hence $aE\leq E^{X}\leq bE$. At the root this says
\[
aI_{H}\leq E^{X}\left(\Omega_{m}\right)\leq bI_{H}.
\]
Since $X_{\varnothing}=I_{H}$, we have 
\[
E^{X}\left(\Omega_{m}\right)=I_{H},
\]
so the root inequality is just 
\[
aI_{H}\leq I_{H}\leq bI_{H}.
\]

Conversely, suppose $F$ is a POVM such that 
\[
aE\leq F\leq bE
\]
for some $0<a\leq b<\infty$. Since $F\leq bE$, \prettyref{thm:5-2}
gives a unique positive range martingale $X=\left\{ X_{w}\right\} $
such that 
\[
F\left(\left[w\right]\right)=E^{1/2}_{w}X_{w}E^{1/2}_{w}.
\]
The bound $X_{w}\leq bI_{H_{w}}$ follows from \prettyref{thm:5-2}.
To get the lower bound, apply the same theorem to the positive measure
$F-aE$. Its cylinder values are 
\[
F\left(\left[w\right]\right)-aE_{w}=E^{1/2}_{w}\left(X_{w}-aI_{H_{w}}\right)E^{1/2}_{w}.
\]
Hence 
\[
X_{w}-aI_{H_{w}}\geq0.
\]
Thus 
\[
aI_{H_{w}}\leq X_{w}\leq bI_{H_{w}}
\]
for every $w$. Since $F\left(\Omega_{m}\right)=I$, we also have
\[
X_{\varnothing}=I_{H}.
\]
Thus $X$ is strictly positive and normalized. The two constructions
are inverse to each other by the uniqueness statement in \prettyref{thm:5-2}.
\end{proof}

We now compute how the local tree coordinates change under this operation.
For the transformed POVM $E^{X}$, the canonical range space at $w$
would be 
\[
H^{X}_{w}=\overline{ran}\,((E^{X}_{w})^{1/2}).
\]
This space is naturally attached to $E^{X}_{w}$, but it is not literally
the original space $H_{w}$. We therefore use an equivalent factorization
of $E^{X}_{w}$ whose target space is $H_{w}$. Set 
\[
S^{X}_{w}=X^{1/2}_{w}E^{1/2}_{w}:H\to H_{w}.
\]
Then 
\[
E^{X}_{w}=((E^{X}_{w})^{1/2})^{*}(E^{X}_{w})^{1/2}=\left(S^{X}_{w}\right)^{*}S^{X}_{w}.
\]
Moreover, 
\[
ran\left(S^{X}_{w}\right)=X^{1/2}_{w}ran\,(E^{1/2}_{w})
\]
is dense in $H_{w}$, because $ran\,(E^{1/2}_{w})$ is dense in $H_{w}$
and $X^{1/2}_{w}$ is invertible on $H_{w}$. Thus both factorizations
have dense range in their target spaces.

It follows that there is a unique unitary 
\[
U^{X}_{w}:H^{X}_{w}\to H_{w}
\]
such that 
\[
U^{X}_{w}\left(E^{X}_{w}\right)^{1/2}h=S^{X}_{w}h
\]
for every $h\in H$. Hence we may compute the transformed edge contractions
on the original spaces $H_{w}$, using the factors $S^{X}_{w}$.
\begin{thm}
\label{thm:6-3} Let $X=\left\{ X_{w}\right\} $ be a strictly positive
normalized range martingale for $E$. In the factor space coordinates
$S^{X}_{w}=X^{1/2}_{w}E^{1/2}_{w}$, the edge contractions of $E^{X}$
are 
\begin{equation}
C^{X}_{wj}=X^{1/2}_{wj}C_{wj}X^{-1/2}_{w}.\label{eq:6-2}
\end{equation}
Consequently, the splitting operators of $E^{X}$ in these coordinates
are 
\begin{equation}
A^{X,\left(j\right)}_{w}=X^{-1/2}_{w}C^{*}_{wj}X_{wj}C_{wj}X^{-1/2}_{w}.\label{eq:6-3}
\end{equation}
\end{thm}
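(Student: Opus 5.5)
The proof will mirror the construction of the original edge contractions in \eqref{eq:3-1}: the edge contraction of $E^{X}$ at the edge $w\to wj$ is the unique operator carrying the parent factor to the child factor. In the factor coordinates $S^{X}_{w}=X^{1/2}_{w}E^{1/2}_{w}$ this means that $C^{X}_{wj}:H_{w}\to H_{wj}$ is determined on the dense subspace $ran\,(S^{X}_{w})\subseteq H_{w}$ by
\[
C^{X}_{wj}S^{X}_{w}h=S^{X}_{wj}h,\qquad h\in H.
\]
This is the definition \eqref{eq:3-1} for $E^{X}$, transported through the unitaries $U^{X}_{w}$ and $U^{X}_{wj}$. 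I will first record this transport: $C^{X}_{wj}=U^{X}_{wj}\widetilde{C}_{wj}(U^{X}_{w})^{*}$, where $\widetilde{C}_{wj}$ is the intrinsic edge contraction of $E^{X}$. Consequently the map is well defined and contractive, and its defining relation is the one displayed.

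Next I will verify \eqref{eq:6-2} directly. Since $X_{w}$ is strictly positive, $aI_{H_{w}}\le X_{w}\le bI_{H_{w}}$, so $X^{-1/2}_{w}$ is a bounded operator on $H_{w}$. The operator $X^{1/2}_{wj}C_{wj}X^{-1/2}_{w}$ is therefore bounded from $H_{w}$ to $H_{wj}$. Apply it to $S^{X}_{w}h$:
\[
X^{1/2}_{wj}C_{wj}X^{-1/2}_{w}X^{1/2}_{w}E^{1/2}_{w}h=X^{1/2}_{wj}C_{wj}E^{1/2}_{w}h=X^{1/2}_{wj}E^{1/2}_{wj}h=S^{X}_{wj}h,
\]
using \eqref{eq:3-1}. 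Both this operator and $C^{X}_{wj}$ are bounded and agree on the dense set $ran\,(S^{X}_{w})$, which is dense by the remark before the theorem. Hence they coincide, and \eqref{eq:6-2} holds. Contractivity is automatic from the transport, but I will also check it independently. The martingale relation gives $\sum_{j}C^{*}_{wj}X_{wj}C_{wj}=X_{w}$, so $\sum_{j}(C^{X}_{wj})^{*}C^{X}_{wj}=X^{-1/2}_{w}X_{w}X^{-1/2}_{w}=I_{H_{w}}$. This shows that the transformed maps again form an isometry $J^{X}_{w}$, as in \prettyref{sec:3}.

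For \eqref{eq:6-3}, I will repeat the argument of \prettyref{lem:3-1} in the new coordinates. The splitting operator of $E^{X}$ at $(w,j)$, expressed on $H_{w}$, is the unique positive contraction $A$ on $H_{w}$ satisfying $E^{X}_{wj}=(S^{X}_{w})^{*}AS^{X}_{w}$. This is \prettyref{lem:2-1} transported by $U^{X}_{w}$, and uniqueness follows from density of $ran\,(S^{X}_{w})$. Using the defining relation, $\langle h,E^{X}_{wj}k\rangle=\langle S^{X}_{wj}h,S^{X}_{wj}k\rangle=\langle S^{X}_{w}h,(C^{X}_{wj})^{*}C^{X}_{wj}S^{X}_{w}k\rangle$. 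Therefore $A=(C^{X}_{wj})^{*}C^{X}_{wj}$, and substituting \eqref{eq:6-2} gives \eqref{eq:6-3}. The sum over $j$ equals $I_{H_{w}}$ by the computation above, which is consistent with \eqref{eq:2-13}.

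The main obstacle is bookkeeping rather than analysis. I have to state precisely what "in these coordinates" means, namely conjugation by the unitaries $U^{X}_{w}$, and check that the intrinsic objects of $E^{X}$ correspond to the displayed ones. That check is exactly the identity $U^{X}_{w}(E^{X}_{w})^{1/2}=S^{X}_{w}$ together with the dense-range arguments. Once the identification is fixed, the rest is the short computation above.
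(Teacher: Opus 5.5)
Your proposal is correct and follows the paper's argument. You characterize $C^{X}_{wj}$ on the dense subspace $ran\,(S^{X}_{w})$ by $C^{X}_{wj}S^{X}_{w}h=S^{X}_{wj}h$, use $C_{wj}E^{1/2}_{w}h=E^{1/2}_{wj}h$ together with boundedness and density to get $C^{X}_{wj}=X^{1/2}_{wj}C_{wj}X^{-1/2}_{w}$, and then form $(C^{X}_{wj})^{*}C^{X}_{wj}$ to obtain \eqref{eq:6-3}.

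The differences are cosmetic. You verify the candidate operator rather than solving for it. You also spell out two steps the paper compresses into ``taking adjoint products'': the transport through the unitaries $U^{X}_{w}$, and the Lemma~\ref{lem:3-1}-type identification of the splitting operator.
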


\begin{proof}
In the factor space coordinates, the edge contraction for $E^{X}$
is determined on the dense subspace $ran\left(S^{X}_{w}\right)\subseteq H_{w}$
by 
\[
C^{X}_{wj}S^{X}_{w}h=S^{X}_{wj}h,\qquad h\in H.
\]
Using $S^{X}_{w}=X^{1/2}_{w}E^{1/2}_{w}$, this becomes 
\[
C^{X}_{wj}X^{1/2}_{w}E^{1/2}_{w}h=X^{1/2}_{wj}E^{1/2}_{wj}h.
\]
By the defining relation for the original edge contraction, 
\[
C_{wj}E^{1/2}_{w}h=E^{1/2}_{wj}h.
\]
Hence 
\[
C^{X}_{wj}X^{1/2}_{w}E^{1/2}_{w}h=X^{1/2}_{wj}C_{wj}E^{1/2}_{w}h.
\]
Since $ran\,(E^{1/2}_{w})$ is dense in $H_{w}$, this gives 
\[
C^{X}_{wj}X^{1/2}_{w}x=X^{1/2}_{wj}C_{wj}x,\qquad x\in H_{w}.
\]
Thus 
\[
C^{X}_{wj}=X^{1/2}_{wj}C_{wj}X^{-1/2}_{w}.
\]
Taking adjoint products gives \prettyref{eq:6-3}. 
\end{proof}

The transformed splitting operators still form a positive decomposition
of the identity. This is the local form of the martingale identity.
\begin{cor}
\label{cor:6-4} For every finite word $w$, 
\[
\sum^{m-1}_{j=0}A^{X,\left(j\right)}_{w}=I_{H_{w}}.
\]
\end{cor}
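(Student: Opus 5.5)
The plan is to sum the conjugation formula \prettyref{eq:6-3} of \prettyref{thm:6-3} over $j$, and then use the martingale identity \prettyref{eq:4-1} for $X$. For each fixed $w$, $X_{w}$ is bounded below by $aI_{H_{w}}$ with $a>0$. Hence $X^{-1/2}_{w}$ is a bounded invertible positive operator on $H_{w}$, and it can be pulled out of a finite sum on both sides. This gives
\[
\sum^{m-1}_{j=0}A^{X,\left(j\right)}_{w}=X^{-1/2}_{w}\Bigl(\sum^{m-1}_{j=0}C^{*}_{wj}X_{wj}C_{wj}\Bigr)X^{-1/2}_{w}.
\]

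Next, \prettyref{def:4-2} identifies the middle sum with $X_{w}$. Since $X^{-1/2}_{w}$ and $X_{w}$ are both functions of $X_{w}$, they commute. Therefore the right side equals $X^{-1/2}_{w}X_{w}X^{-1/2}_{w}=I_{H_{w}}$, which is the claim.

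As a consistency check, each $A^{X,(j)}_{w}$ is positive, being of the form $T^{*}X_{wj}T$ with $T=C_{wj}X^{-1/2}_{w}$. Together with the identity just proved, this shows each one is a contraction. So these operators form a genuine positive decomposition of $I_{H_{w}}$, as in \prettyref{eq:2-13}.

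Equivalently, by \prettyref{eq:6-2}, the map $J^{X}_{w}=(C^{X}_{w0},\ldots,C^{X}_{w,m-1})$ is an isometry, mirroring the argument after \prettyref{eq:3-2}. There is no real obstacle here. The only points needing care are that $X^{-1/2}_{w}$ exists as a bounded operator on $H_{w}$, which uses strict positivity, and that it commutes with $X_{w}$. Both are immediate from the continuous functional calculus.
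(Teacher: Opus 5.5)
Your proposal is correct and follows the paper's proof: sum the conjugation formula for $A^{X,(j)}_{w}$ over $j$, factor out $X^{-1/2}_{w}$, use the range martingale identity $\sum_{j}C^{*}_{wj}X_{wj}C_{wj}=X_{w}$, and conclude $X^{-1/2}_{w}X_{w}X^{-1/2}_{w}=I_{H_{w}}$. Your extra remarks, that each $A^{X,(j)}_{w}$ is a positive contraction and that $J^{X}_{w}$ is an isometry, are also correct.
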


\begin{proof}
Using \prettyref{eq:6-3} and the range martingale identity for $X$,
\[
\begin{aligned}\sum^{m-1}_{j=0}A^{X,\left(j\right)}_{w} & =X^{-1/2}_{w}\left(\sum^{m-1}_{j=0}C^{*}_{wj}X_{wj}C_{wj}\right)X^{-1/2}_{w}\\
 & =X^{-1/2}_{w}X_{w}X^{-1/2}_{w}=I_{H_{w}}.
\end{aligned}
\]
\end{proof}

The transforms compose by a noncommutative product of martingales.
Suppose $X$ is a strictly positive normalized range martingale for
$E$. Regard $E^{X}$ in the factor space coordinates from \prettyref{thm:6-3}.
If $Y=\left\{ Y_{w}\right\} $ is a strictly positive normalized range
martingale for $E^{X}$ in these coordinates, define 
\begin{equation}
Z_{w}=X^{1/2}_{w}Y_{w}X^{1/2}_{w}.\label{eq:6-4}
\end{equation}

\begin{prop}
\label{prop:6-5} The family $Z=\left\{ Z_{w}\right\} $ is a strictly
positive normalized range martingale for $E$, and 
\[
\left(E^{X}\right)^{Y}=E^{Z}.
\]
\end{prop}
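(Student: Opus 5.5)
We verify three things in turn: the martingale identity for $Z$, strict positivity and normalization of $Z$, and equality of cylinder values. Equality of the measures then follows because POVMs on $\Omega_{m}$ are determined by their cylinder values (\prettyref{prop:2-2}, \prettyref{thm:2-6}).

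First, $Y$ being a range martingale for $E^{X}$ in the factor space coordinates means that, with the edge contractions $C^{X}_{wj}$ from \prettyref{eq:6-2},
\[
Y_{w}=\sum^{m-1}_{j=0}\left(C^{X}_{wj}\right)^{*}Y_{wj}C^{X}_{wj}=\sum^{m-1}_{j=0}X^{-1/2}_{w}C^{*}_{wj}X^{1/2}_{wj}Y_{wj}X^{1/2}_{wj}C_{wj}X^{-1/2}_{w}.
\]
Conjugating both sides by $X^{1/2}_{w}$ gives
\[
Z_{w}=X^{1/2}_{w}Y_{w}X^{1/2}_{w}=\sum^{m-1}_{j=0}C^{*}_{wj}Z_{wj}C_{wj},
\]
which is \prettyref{eq:4-1} for $Z$. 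Self-adjointness of each $Z_{w}$ is clear.

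Next, for bounds, take $aI\leq X_{w}\leq bI$ and $a'I\leq Y_{w}\leq b'I$. Then
\[
aa'I_{H_{w}}\leq a'X_{w}\leq X^{1/2}_{w}Y_{w}X^{1/2}_{w}\leq b'X_{w}\leq bb'I_{H_{w}},
\]
so $Z$ is bounded and strictly positive. At the root, $Z_{\varnothing}=I^{1/2}\,I\,I^{1/2}=I_{H}$, so $Z$ is normalized. \prettyref{prop:6-2} therefore gives the POVM $E^{Z}$ with cylinder values $E^{1/2}_{w}Z_{w}E^{1/2}_{w}$.

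The step needing care is interpreting $(E^{X})^{Y}$ in the factor space coordinates. In the canonical coordinates, $(E^{X})^{Y}_{w}=((E^{X}_{w})^{1/2})^{*}\widetilde{Y}_{w}(E^{X}_{w})^{1/2}$, where $\widetilde{Y}_{w}=(U^{X}_{w})^{*}Y_{w}U^{X}_{w}$ is transported by the unitary $U^{X}_{w}$. Using $U^{X}_{w}(E^{X}_{w})^{1/2}=S^{X}_{w}$, this equals
\[
\left(S^{X}_{w}\right)^{*}Y_{w}S^{X}_{w}=E^{1/2}_{w}X^{1/2}_{w}Y_{w}X^{1/2}_{w}E^{1/2}_{w}=E^{1/2}_{w}Z_{w}E^{1/2}_{w}=E^{Z}_{w}.
\]
One should also check that the transported $\widetilde{Y}$ is a genuine range martingale for $E^{X}$. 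This holds because $U^{X}_{wj}$ intertwines the canonical edge contractions of $E^{X}$ with $C^{X}_{wj}$, which follows from their defining relations on dense ranges.

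Both $(E^{X})^{Y}$ and $E^{Z}$ are POVMs with the same cylinder values, so they coincide by uniqueness.
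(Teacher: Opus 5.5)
Your proof is correct and follows the paper's argument: you conjugate the transformed martingale identity by $X^{1/2}_{w}$, check the root normalization, compute $(S^{X}_{w})^{*}Y_{w}S^{X}_{w}=E^{1/2}_{w}Z_{w}E^{1/2}_{w}$, and conclude by uniqueness from cylinder values. The explicit bounds $aa'I\leq Z_{w}\leq bb'I$, and the check that transporting $Y$ by the unitaries $U^{X}_{w}$ gives a genuine range martingale for $E^{X}$ in canonical coordinates, fill in details the paper leaves implicit.
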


\begin{proof}
The normalization is immediate: 
\[
Z_{\varnothing}=X^{1/2}_{\varnothing}Y_{\varnothing}X^{1/2}_{\varnothing}=I_{H}.
\]
Strict positivity follows from the corresponding bounds for $X$ and
$Y$.

It remains to verify the martingale identity. Since $Y$ is a range
martingale for $E^{X}$ in the factor space coordinates, 
\[
Y_{w}=\sum^{m-1}_{j=0}\left(C^{X}_{wj}\right)^{*}Y_{wj}C^{X}_{wj}.
\]
Using \prettyref{eq:6-2}, we get 
\[
Y_{w}=X^{-1/2}_{w}\left(\sum^{m-1}_{j=0}C^{*}_{wj}X^{1/2}_{wj}Y_{wj}X^{1/2}_{wj}C_{wj}\right)X^{-1/2}_{w}.
\]
Multiplying on the left and right by $X^{1/2}_{w}$ gives 
\[
Z_{w}=\sum^{m-1}_{j=0}C^{*}_{wj}Z_{wj}C_{wj}.
\]
Thus $Z$ is a range martingale for $E$.

Finally, for every word $w$, 
\[
\left(E^{X}\right)^{Y}_{w}=\left(S^{X}_{w}\right)^{*}Y_{w}S^{X}_{w}=E^{1/2}_{w}X^{1/2}_{w}Y_{w}X^{1/2}_{w}E^{1/2}_{w}=E^{1/2}_{w}Z_{w}E^{1/2}_{w}=E^{Z}_{w}.
\]
Since POVMs on $\Omega_{m}$ are determined by their cylinder values,
\[
\left(E^{X}\right)^{Y}=E^{Z}.
\]
\end{proof}

The inverse transform is explicit.
\begin{cor}
\label{cor:6-6} Let $X$ be a strictly positive normalized range
martingale for $E$. In the $E^{X}$-coordinates from \prettyref{thm:6-3},
the family 
\[
X^{-1}=\left\{ X^{-1}_{w}\right\} 
\]
is a strictly positive normalized range martingale for $E^{X}$, and
\[
\left(E^{X}\right)^{X^{-1}}=E.
\]
\end{cor}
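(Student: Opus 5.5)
We check three things in turn: the martingale identity for $X^{-1}$ with respect to the transformed edge contractions $C^{X}_{wj}$, the strict positivity and normalization of $X^{-1}$, and the identification of $(E^{X})^{X^{-1}}$ with $E$. The last two are routine. If $aI_{H_{w}}\leq X_{w}\leq bI_{H_{w}}$, then $b^{-1}I_{H_{w}}\leq X^{-1}_{w}\leq a^{-1}I_{H_{w}}$ uniformly in $w$. Also $X^{-1}_{\varnothing}=I_{H}$ because $X_{\varnothing}=I_{H}$. Each $X^{-1}_{w}$ is a bounded self-adjoint operator on $H_{w}$, which is the correct space in the factor coordinates of \prettyref{thm:6-3}.

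For the martingale identity, we substitute \prettyref{eq:6-2}:
\[
\sum^{m-1}_{j=0}\left(C^{X}_{wj}\right)^{*}X^{-1}_{wj}C^{X}_{wj}=\sum^{m-1}_{j=0}X^{-1/2}_{w}C^{*}_{wj}X^{1/2}_{wj}X^{-1}_{wj}X^{1/2}_{wj}C_{wj}X^{-1/2}_{w}=X^{-1/2}_{w}\Bigl(\sum^{m-1}_{j=0}C^{*}_{wj}C_{wj}\Bigr)X^{-1/2}_{w}.
\]
By \prettyref{lem:3-1} and \prettyref{eq:2-13}, the middle sum is $I_{H_{w}}$, so the right side equals $X^{-1}_{w}$. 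This is the range martingale relation \prettyref{eq:4-1} for $E^{X}$ in these coordinates.

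One subtlety needs care. The statement of \prettyref{thm:5-2} refers to the canonical range spaces $H^{X}_{w}$ and contractions of $E^{X}$, while we are working in the factor coordinates on $H_{w}$. The unitaries $U^{X}_{w}:H^{X}_{w}\to H_{w}$ intertwine the two descriptions: the canonical contraction of $E^{X}$ is $(U^{X}_{wj})^{*}C^{X}_{wj}U^{X}_{w}$. Hence a range martingale in one picture transports to one in the other via $X_{w}\mapsto (U^{X}_{w})^{*}X_{w}U^{X}_{w}$, and the cylinder formula reads $(E^{X})^{Y}_{w}=(S^{X}_{w})^{*}Y_{w}S^{X}_{w}$. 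This is exactly how the transform was used in \prettyref{prop:6-5}, so we can invoke it in the same way. Then
\[
\left(E^{X}\right)^{X^{-1}}_{w}=E^{1/2}_{w}X^{1/2}_{w}X^{-1}_{w}X^{1/2}_{w}E^{1/2}_{w}=E_{w},
\]
and since POVMs are determined by their cylinder values, $(E^{X})^{X^{-1}}=E$.

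Alternatively, one can take $Y=X^{-1}$ in \prettyref{prop:6-5}. This gives $Z_{w}=X^{1/2}_{w}X^{-1}_{w}X^{1/2}_{w}=I_{H_{w}}$, and $E^{I}=E$ because the identity family is the range martingale of $E$ itself. Either route is direct, and the only real care needed is the coordinate bookkeeping just described.
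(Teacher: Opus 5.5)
Your proposal is correct and follows the paper's proof. You use the same substitution of (\ref{eq:6-2}) together with $\sum_{j}C^{*}_{wj}C_{wj}=I_{H_{w}}$ for the martingale identity, and the same application of \prettyref{prop:6-5} with $Y=X^{-1}$, which gives $Z_{w}=I_{H_{w}}$. Your remark on transporting through the unitaries $U^{X}_{w}$ is a correct spelling-out of the factor-coordinate convention that the paper leaves implicit.
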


\begin{proof}
Since $X$ is strictly positive and normalized, the family 
\[
X^{-1}=\left\{ X^{-1}_{w}\right\} _{w}
\]
is also strictly positive and satisfies $X^{-1}_{\varnothing}=I_{H}$.
Using \prettyref{eq:6-2}, 
\[
\begin{aligned}\sum^{m-1}_{j=0}\left(C^{X}_{wj}\right)^{*}X^{-1}_{wj}C^{X}_{wj} & =\sum^{m-1}_{j=0}X^{-1/2}_{w}C^{*}_{wj}X^{1/2}_{wj}X^{-1}_{wj}X^{1/2}_{wj}C_{wj}X^{-1/2}_{w}\\
 & =X^{-1/2}_{w}\left(\sum^{m-1}_{j=0}C^{*}_{wj}C_{wj}\right)X^{-1/2}_{w}=X^{-1}_{w}.
\end{aligned}
\]
Thus $X^{-1}$ is a range martingale for $E^{X}$. Applying \prettyref{prop:6-5}
with $Y=X^{-1}$, the combined martingale is the identity family.
Hence 
\[
\left(E^{X}\right)^{X^{-1}}=E.
\]
\end{proof}

\begin{rem}
\label{rem:6-7} In the scalar case, this reduces to the classical
change of measure by a positive martingale. Let $\nu$ be a scalar
probability measure on $\Omega_{m}$, and write 
\[
\nu_{wj}=p_{w}\left(j\right)\nu_{w}.
\]
A strictly positive normalized martingale is a family $x_{w}>0$,
bounded above and below away from zero, satisfying 
\[
x_{w}=\sum^{m-1}_{j=0}p_{w}\left(j\right)x_{wj},\qquad x_{\varnothing}=1.
\]
The transformed measure $\nu^{x}$ has cylinder values 
\[
\nu^{x}_{w}=x_{w}\nu_{w}.
\]
Its local transition probabilities are 
\[
p^{x}_{w}\left(j\right)=p_{w}\left(j\right)\frac{x_{wj}}{x_{w}}.
\]
This is the scalar form of \prettyref{eq:6-3}. Thus \prettyref{thm:6-3}
is the operator-valued version of the Doob change of measure on a
filtered tree. 
\end{rem}

\section{Quadratic variation}\label{sec:7}

We now attach a square-function to the range martingales from \prettyref{sec:4}.
Square functions and quadratic variation are classical tools in martingale
theory, going back to work on martingale transforms and square-function
estimates \cite{MR208647,MR268966}. Later developments include conditioned
square functions and noncommutative martingale inequalities in noncommutative
$L^{p}$-spaces \cite{MR1482934,MR1929141,MR2319715,MR2448025,MR4072235,MR4521734,MR4673616}.
Variance-type quantities for quantum random variables and operator-valued
measures have been studied from several related directions, including
quantum conditional expectation, randomness and noise in quantum measurements,
operator-valued variance, and integration with respect to positive
operator-valued measures \cite{MR2953266,MR2907638,MR3415711,MR4137283}.
In the present section, the variance terms are attached locally to
the range space martingale relation. The positivity will be proved
directly from the range space isometries, rather than by appealing
to a general Schwarz inequality.

Let $X=\left\{ X_{w}\right\} $ be a bounded self-adjoint range martingale
for $E$. Thus 
\[
X_{w}=\sum^{m-1}_{j=0}C^{*}_{wj}X_{wj}C_{wj}
\]
for every finite word $w$.

Recall from \prettyref{sec:3} that the edge contractions at $w$
form the isometry 
\[
J_{w}:H_{w}\to\bigoplus^{m-1}_{j=0}H_{wj},\qquad J_{w}x=\left(C_{w0}x,\ldots,C_{w,m-1}x\right).
\]
If 
\[
Y_{w}=\bigoplus^{m-1}_{j=0}X_{wj}
\]
on $\bigoplus^{m-1}_{j=0}H_{wj}$, then the martingale relation can
be written in the compressed form 
\[
X_{w}=J^{*}_{w}Y_{w}J_{w}.
\]

\begin{defn}
\label{def:7-1} The local variance of $X$ at $w$ is 
\begin{equation}
\Gamma_{w}\left(X\right)=\sum^{m-1}_{j=0}C^{*}_{wj}X^{2}_{wj}C_{wj}-X^{2}_{w}.\label{eq:7-1}
\end{equation}
\end{defn}

\begin{lem}
\label{lem:7-2} For every bounded self-adjoint range martingale $X$
and every finite word $w$, 
\[
\Gamma_{w}\left(X\right)\geq0.
\]
More explicitly, 
\begin{equation}
\Gamma_{w}\left(X\right)=J^{*}_{w}Y_{w}\left(I-J_{w}J^{*}_{w}\right)Y_{w}J_{w}.\label{eq:7-2}
\end{equation}
\end{lem}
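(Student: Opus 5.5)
The plan is to derive \prettyref{eq:7-2} by direct algebra from the compressed form of the martingale relation, and then read off positivity. The two ingredients are that $J_{w}$ is an isometry (shown in \prettyref{sec:3} from \prettyref{lem:3-1} and \prettyref{eq:2-13}) and that $Y_{w}=\bigoplus_{j}X_{wj}$ is block diagonal.

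First I rewrite the sum in \prettyref{eq:7-1} through $J_{w}$. Since $Y_{w}$ is block diagonal with self-adjoint blocks, $Y_{w}^{2}=\bigoplus_{j}X_{wj}^{2}$. By the definition \prettyref{eq:3-2} of $J_{w}$,
\[
J^{*}_{w}Y^{2}_{w}J_{w}=\sum^{m-1}_{j=0}C^{*}_{wj}X^{2}_{wj}C_{wj}.
\]
Next I use the martingale relation in the form $X_{w}=J^{*}_{w}Y_{w}J_{w}$, which gives
\[
X^{2}_{w}=J^{*}_{w}Y_{w}J_{w}J^{*}_{w}Y_{w}J_{w}.
\]
Subtracting the two expressions yields
\[
\Gamma_{w}(X)=J^{*}_{w}Y_{w}\left(I-J_{w}J^{*}_{w}\right)Y_{w}J_{w},
\]
which is \prettyref{eq:7-2}.

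For positivity, note that $J^{*}_{w}J_{w}=I_{H_{w}}$, so $Q=J_{w}J^{*}_{w}$ is an orthogonal projection on $\bigoplus_{j}H_{wj}$. Hence $I-Q$ is also an orthogonal projection, and $I-Q=(I-Q)^{*}(I-Q)$. Because $Y_{w}$ is self-adjoint, this gives
\[
\Gamma_{w}(X)=\bigl((I-Q)Y_{w}J_{w}\bigr)^{*}\bigl((I-Q)Y_{w}J_{w}\bigr)\geq0.
\]

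There is no serious obstacle here. The only point needing care is the bookkeeping that the identity $Y_{w}^{2}=\bigoplus_{j}X_{wj}^{2}$ compresses through $J_{w}$ to the sum in \prettyref{eq:7-1}, and this is immediate from the componentwise definition of $J_{w}$. Boundedness of the $X_{wj}$ guarantees that every operator involved is bounded.
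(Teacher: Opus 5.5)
Your proof is correct and follows the paper's argument essentially line by line. You rewrite the sum as $J_w^*Y_w^2J_w$, expand $X_w^2$ through the compressed martingale relation, subtract, and use that $I-J_wJ_w^*$ is an orthogonal projection; your factorization $\Gamma_w(X)=B^*B$ with $B=(I-J_wJ_w^*)Y_wJ_w$ simply spells out the positivity step that the paper states more briefly.
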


\begin{proof}
Since 
\[
J_{w}x=\left(C_{w0}x,\ldots,C_{w,m-1}x\right),
\]
we have 
\[
J^{*}_{w}Y^{2}_{w}J_{w}=\sum^{m-1}_{j=0}C^{*}_{wj}X^{2}_{wj}C_{wj}.
\]
Also, by the martingale identity, 
\[
X^{2}_{w}=\left(J^{*}_{w}Y_{w}J_{w}\right)^{2}=J^{*}_{w}Y_{w}J_{w}J^{*}_{w}Y_{w}J_{w}.
\]
Subtracting gives 
\[
\Gamma_{w}\left(X\right)=J^{*}_{w}Y_{w}\left(I-J_{w}J^{*}_{w}\right)Y_{w}J_{w}.
\]
Since $J_{w}$ is an isometry, $J_{w}J^{*}_{w}$ is the orthogonal
projection onto $J_{w}H_{w}$. Hence $I-J_{w}J^{*}_{w}$ is positive,
and \prettyref{eq:7-2} gives $\Gamma_{w}\left(X\right)\geq0$. 
\end{proof}

For $n\geq0$, write 
\[
X^{\left(n\right)}=\bigoplus_{\left|w\right|=n}X_{w}\in B\left(K_{n}\right).
\]
Define the level-$n$ square operator on $H$ by 
\begin{equation}
Q_{n}\left(X\right)=J^{*}_{0,n}\left(X^{\left(n\right)}\right)^{2}J_{0,n}.\label{eq:7-3}
\end{equation}
Equivalently, by \prettyref{lem:3-2}, 
\begin{equation}
Q_{n}\left(X\right)=\sum_{\left|w\right|=n}E^{1/2}_{w}X^{2}_{w}E^{1/2}_{w}.\label{eq:7-4}
\end{equation}
Here $X_{w}$ is extended by zero on $H^{\perp}_{w}$ when written
inside $B\left(H\right)$.
\begin{thm}
\label{thm:7-3} Let $X=\left\{ X_{w}\right\} $ be a bounded self-adjoint
range martingale. Then 
\begin{equation}
Q_{n+1}\left(X\right)-Q_{n}\left(X\right)=\sum_{\left|w\right|=n}E^{1/2}_{w}\Gamma_{w}\left(X\right)E^{1/2}_{w}\label{eq:7-5}
\end{equation}
for every $n\geq0$. In particular, $\left\{ Q_{n}\left(X\right)\right\} _{n\geq0}$
is an increasing sequence of positive operators. If 
\[
M=\sup_{w}\left\Vert X_{w}\right\Vert ,
\]
then 
\[
0\leq Q_{n}\left(X\right)\leq M^{2}I
\]
for every $n$. Hence $Q_{n}\left(X\right)$ has a strong limit. 
\end{thm}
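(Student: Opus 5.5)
The plan is to compute $Q_{n+1}(X)$ level by level, using the factorization $J_{0,n+1}=J_{n}J_{0,n}$ and the local isometries $J_{w}$. By \prettyref{eq:7-3}, $Q_{n+1}(X)=J^{*}_{0,n}J^{*}_{n}(X^{(n+1)})^{2}J_{n}J_{0,n}$. By \prettyref{eq:3-4}, $J_{n}$ is the direct sum over $|w|=n$ of the maps $J_{w}$. Also $X^{(n+1)}$ restricted to the children of $w$ is $Y_{w}=\bigoplus_{j}X_{wj}$. Hence $J^{*}_{n}(X^{(n+1)})^{2}J_{n}$ is block diagonal on $K_{n}$, with $w$-block
\[
J^{*}_{w}Y^{2}_{w}J_{w}=\sum_{j}C^{*}_{wj}X^{2}_{wj}C_{wj}=X^{2}_{w}+\Gamma_{w}(X),
\]
by \prettyref{def:7-1}. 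Therefore
\[
J^{*}_{n}(X^{(n+1)})^{2}J_{n}=(X^{(n)})^{2}+\bigoplus_{|w|=n}\Gamma_{w}(X).
\]
Compressing by $J_{0,n}$ and using \prettyref{lem:3-2} (the $w$-component of $J_{0,n}h$ is $E^{1/2}_{w}h$) gives
\[
Q_{n+1}(X)=Q_{n}(X)+\sum_{|w|=n}E^{1/2}_{w}\Gamma_{w}(X)E^{1/2}_{w},
\]
which is \prettyref{eq:7-5}. Here each $\Gamma_{w}(X)$ is extended by zero off $H_{w}$.

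Monotonicity then follows from \prettyref{lem:7-2}: each $\Gamma_{w}(X)\geq0$, so every summand $E^{1/2}_{w}\Gamma_{w}(X)E^{1/2}_{w}$ is positive, and so is their sum. Positivity of $Q_{n}(X)$ itself is immediate from \prettyref{eq:7-3}, since $(X^{(n)})^{2}\geq0$.

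For the upper bound, $\|X^{(n)}\|\leq M$, so $(X^{(n)})^{2}\leq M^{2}I_{K_{n}}$. Since $J_{0,n}$ is an isometry (a composition of the isometries $J_{r}$), we get $Q_{n}(X)\leq M^{2}J^{*}_{0,n}J_{0,n}=M^{2}I$. Alternatively, from \prettyref{eq:7-4}, $Q_{n}(X)\leq M^{2}\sum_{|w|=n}E_{w}=M^{2}I$.

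An increasing sequence of self-adjoint operators bounded above by $M^{2}I$ converges strongly (Vigier's theorem). This gives the strong limit.

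There is no serious obstacle. The one point needing care is the block-diagonal identification of $J^{*}_{n}(X^{(n+1)})^{2}J_{n}$: one must check that the components $C_{wj}x_{w}$ coming from different parents $w$ live in distinct summands of $K_{n+1}$. They do, because each word $u$ of length $n+1$ has a unique parent, so no cross terms appear.
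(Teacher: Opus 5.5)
Your proof is correct and follows the same route as the paper. You decompose $J_n$ into the local isometries $J_w$, identify the $w$-blocks at level $n$, compress by $J_{0,n}$ via Lemma~\ref{lem:3-2}, and then get the bounds from Lemma~\ref{lem:7-2} and monotone convergence. The only difference is that you read \eqref{eq:7-5} off Definition~\ref{def:7-1} directly, so the martingale relation is used only for the positivity of $\Gamma_w(X)$; the paper instead substitutes $Y_n=J_n^{*}Y_{n+1}J_n$ and uses the factored form \eqref{eq:7-2} already in the identity.
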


\begin{proof}
Let 
\[
Y_{n}=X^{\left(n\right)}
\]
on $K_{n}$. Since $X$ is a range martingale, 
\[
Y_{n}=J^{*}_{n}Y_{n+1}J_{n}.
\]
Therefore 
\[
Q_{n}\left(X\right)=J^{*}_{0,n}Y^{2}_{n}J_{0,n}=J^{*}_{0,n}J^{*}_{n}Y_{n+1}J_{n}J^{*}_{n}Y_{n+1}J_{n}J_{0,n}.
\]
On the other hand, 
\[
Q_{n+1}\left(X\right)=J^{*}_{0,n}J^{*}_{n}Y^{2}_{n+1}J_{n}J_{0,n}.
\]
Subtracting gives 
\[
Q_{n+1}\left(X\right)-Q_{n}\left(X\right)=J^{*}_{0,n}J^{*}_{n}Y_{n+1}\left(I-J_{n}J^{*}_{n}\right)Y_{n+1}J_{n}J_{0,n}.
\]
The operator inside this expression is block diagonal over the words
$w$ with $\left|w\right|=n$, and its $w$-block is 
\[
J^{*}_{w}Y_{w}\left(I-J_{w}J^{*}_{w}\right)Y_{w}J_{w}=\Gamma_{w}\left(X\right)
\]
by \prettyref{lem:7-2}. Using \prettyref{lem:3-2}, this gives \prettyref{eq:7-5}.

The right side of \prettyref{eq:7-5} is positive by \prettyref{lem:7-2},
so $Q_{n}\left(X\right)$ is increasing. Also, 
\[
0\leq X^{2}_{w}\leq M^{2}I_{H_{w}}
\]
for every $w$. Hence 
\[
0\leq Q_{n}\left(X\right)\leq M^{2}\sum_{\left|w\right|=n}E_{w}=M^{2}I.
\]
The existence of the strong limit follows from the monotone convergence
theorem for bounded increasing sequences of self-adjoint operators. 
\end{proof}

\begin{defn}
\label{def:7-4} The quadratic variation of $X$ at level $n$ is
\[
\left[X\right]_{n}=Q_{n}\left(X\right)-X^{2}_{\varnothing}.
\]
The terminal quadratic variation is the strong limit 
\[
\left[X\right]_{\infty}=\lim_{n\to\infty}\left[X\right]_{n}.
\]
\end{defn}

Since $Q_{0}\left(X\right)=X^{2}_{\varnothing}$, \prettyref{thm:7-3}
gives 
\[
\left[X\right]_{n}=\sum^{n-1}_{r=0}\sum_{\left|w\right|=r}E^{1/2}_{w}\Gamma_{w}\left(X\right)E^{1/2}_{w}.
\]
Thus $\left[X\right]_{\infty}$ is the accumulated local variance
of the martingale along the tree.

We now relate this square-function to the local sharpness of the splitting
operators. Fix a finite word $w$, and let 
\[
Q_{wj}
\]
be the orthogonal projection of 
\[
\bigoplus^{m-1}_{r=0}H_{wr}
\]
onto the $j$-th summand $H_{wj}$. Then 
\[
J^{*}_{w}Q_{wj}J_{w}=C^{*}_{wj}C_{wj}=A^{\left(j\right)}_{w}.
\]
The same calculation as in \prettyref{lem:7-2}, applied to the child-level
projection $Q_{wj}$, gives 
\[
J^{*}_{w}Q_{wj}\left(I-J_{w}J^{*}_{w}\right)Q_{wj}J_{w}=A^{\left(j\right)}_{w}-(A^{\left(j\right)}_{w})^{2}.
\]
Thus the local variance of the child coordinate projection is 
\[
A^{\left(j\right)}_{w}-(A^{\left(j\right)}_{w})^{2}.
\]
Summing over the children gives the local sharpness loss 
\begin{equation}
N_{w}=I_{H_{w}}-\sum^{m-1}_{j=0}(A^{\left(j\right)}_{w})^{2}=\sum^{m-1}_{j=0}\left(A^{\left(j\right)}_{w}-(A^{\left(j\right)}_{w})^{2}\right).\label{eq:7-6}
\end{equation}
This connects the square-function calculation with the familiar distinction
between general POVMs and sharp, projection-valued observables \cite{MR2817979,MR2770378,MR4112227}.
\begin{prop}
\label{prop:7-5} For every finite word $w$, 
\[
N_{w}\geq0.
\]
Moreover, 
\[
N_{w}=0
\]
if and only if $A^{\left(j\right)}_{w}$ is an orthogonal projection
for every $0\leq j\leq m-1$. Consequently, $E$ is projection-valued
if and only if $N_{w}=0$ for every finite word $w$. 
\end{prop}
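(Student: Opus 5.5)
The plan is to work entirely on the range space $H_{w}$ with the operators $A^{(j)}_{w}$ from \prettyref{thm:2-6}, using only that each is a positive contraction and that they sum to $I_{H_{w}}$ by \prettyref{eq:2-13}. The second expression for $N_{w}$ in \prettyref{eq:7-6} is obtained by substituting $I_{H_{w}}=\sum_{j}A^{(j)}_{w}$. Since $0\leq A^{(j)}_{w}\leq I_{H_{w}}$, functional calculus applied to $t\mapsto t-t^{2}\geq0$ on $[0,1]$ gives $A^{(j)}_{w}-(A^{(j)}_{w})^{2}\geq0$ for each $j$, so $N_{w}\geq0$. One can also read this from the variance identity $J^{*}_{w}Q_{wj}(I-J_{w}J^{*}_{w})Q_{wj}J_{w}=A^{(j)}_{w}-(A^{(j)}_{w})^{2}$ displayed just before the proposition, as in \prettyref{lem:7-2}. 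Either route is immediate.

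For the equality case, if every $A^{(j)}_{w}$ is a projection then each summand vanishes and $N_{w}=0$. Conversely, suppose $N_{w}=0$. The summands $A^{(j)}_{w}-(A^{(j)}_{w})^{2}$ are positive operators with sum zero. Pairing with a vector $x$ gives a sum of nonnegative numbers equal to zero, so $\langle x,(A^{(j)}_{w}-(A^{(j)}_{w})^{2})x\rangle=0$ for all $x$ and all $j$. Since each summand is positive, it is therefore zero, so $(A^{(j)}_{w})^{2}=A^{(j)}_{w}$. A self-adjoint idempotent is an orthogonal projection. This is the only step needing a small argument, namely that a finite sum of positive operators vanishes only if each term does, and it is standard.

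For the final statement in the binary case $m=2$, \prettyref{prop:2-4} already says that $E$ is projection-valued if and only if every $A_{w}$ is idempotent, and $A^{(1)}_{w}=I-A^{(0)}_{w}$ is then automatically a projection. For general $m$, \prettyref{prop:2-4} is only stated for the binary tree, so I would repeat its argument with $m$ children.

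\begin{itemize}
\item If $E$ is a PVM, each $E_{w}$ is a projection with $H_{w}=E_{w}H$, and $E_{wj}\leq E_{w}$. Then \prettyref{eq:2-14} gives $A^{(j)}_{w}=E_{wj}|_{H_{w}}$, which is a projection.
\item Conversely, if every $A^{(j)}_{w}$ is a projection, induction on $|w|$ shows each $E_{wj}=E_{w}A^{(j)}_{w}E_{w}=A^{(j)}_{w}$ (extended by zero) is a projection. Pairwise orthogonality of the children then follows from $\sum_{j}E_{wj}=E_{w}$, since a sum of projections is a projection only if they are mutually orthogonal.
\item The monotone class argument from \prettyref{prop:2-4} then passes multiplicativity from cylinders to all Borel sets.
\end{itemize}

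Combining this with the equivalence $N_{w}=0\iff$ all $A^{(j)}_{w}$ are projections yields the last claim.
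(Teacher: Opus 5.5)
Your proposal is correct and follows essentially the same route as the paper: positivity from $t-t^{2}\geq0$ on $[0,1]$, vanishing of each positive summand when $N_{w}=0$, then the induction $E_{wj}=E_{w}A^{(j)}_{w}E_{w}=A^{(j)}_{w}$ together with orthogonality of siblings (you derive it from $\sum_{j}E_{wj}=E_{w}$, the paper from $A^{(i)}_{w}\leq I_{H_{w}}-A^{(j)}_{w}$) and the monotone class argument of \prettyref{prop:2-4}. Your explicit check that a PVM has projection splitting operators for general $m$ is a small improvement, since the paper only calls this direction immediate while \prettyref{prop:2-4} is stated for $m=2$.
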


\begin{proof}
Since $0\leq A^{\left(j\right)}_{w}\leq I_{H_{w}}$, 
\[
A^{\left(j\right)}_{w}-(A^{\left(j\right)}_{w})^{2}\geq0.
\]
Hence $N_{w}\geq0$.

If $N_{w}=0$, then each positive summand in \prettyref{eq:7-6} is
zero. Therefore 
\[
A^{\left(j\right)}_{w}=(A^{\left(j\right)}_{w})^{2}
\]
for every $j$, so each $A^{\left(j\right)}_{w}$ is an orthogonal
projection. The converse is immediate.

Assume now that $N_{w}=0$ for every finite word $w$. Then each local
splitting operator $A^{\left(j\right)}_{w}$ is a projection. Since
\[
\sum^{m-1}_{j=0}A^{\left(j\right)}_{w}=I_{H_{w}},
\]
these projections are pairwise orthogonal. Indeed, for $i\neq j$,
\[
A^{\left(i\right)}_{w}\leq I_{H_{w}}-A^{\left(j\right)}_{w},
\]
and hence 
\[
A^{\left(i\right)}_{w}A^{\left(j\right)}_{w}=0.
\]

We prove by induction on $\left|w\right|$ that every cylinder value
$E_{w}$ is a projection. This is true for $w=\varnothing$, since
$E_{\varnothing}=I$. Suppose $E_{w}$ is a projection. Then 
\[
H_{w}=E_{w}H
\]
and $E^{1/2}_{w}=E_{w}$. Hence 
\[
E_{wj}=E_{w}A^{\left(j\right)}_{w}E_{w}=A^{\left(j\right)}_{w}
\]
as an operator on $H$, where $A^{\left(j\right)}_{w}$ is extended
by zero on $H^{\perp}_{w}$. Thus every $E_{wj}$ is a projection.
This proves the induction step.

It remains to pass from cylinders to Borel sets. For finite words
$u$ and $v$, the cylinder projections satisfy 
\[
E_{u}E_{v}=E\left(\left[u\right]\cap\left[v\right]\right).
\]
Indeed, if one word extends the other, this follows from the order
relation between the corresponding cylinder projections. If the two
words are incompatible, the corresponding cylinder sets are disjoint,
and the projections are orthogonal.

By finite additivity, the same identity holds for finite unions of
cylinders. The monotone class argument used in the proof of \prettyref{prop:2-4}
then gives 
\[
E\left(B\cap C\right)=E\left(B\right)E\left(C\right)
\]
for all Borel sets $B,C\subseteq\Omega_{m}$. Taking $C=B$, we get
\[
E\left(B\right)^{2}=E\left(B\right)
\]
for every Borel set $B$. Since $E\left(B\right)$ is positive, it
is an orthogonal projection. Thus $E$ is projection-valued.

The converse is immediate from the first part of the proof. If $E$
is projection-valued, then every local splitting operator is a projection,
and hence $N_{w}=0$ for every $w$. 
\end{proof}

The local sharpness terms are thus instances of the same variance
calculus as \prettyref{thm:7-3}. A PVM is the case where the coordinate
projections have zero local variance at every vertex. General POVMs
allow positive local variance, and $\left[X\right]_{\infty}$ measures
how such variance accumulates for an arbitrary bounded self-adjoint
range martingale.

\bibliographystyle{amsalpha}
\bibliography{ref}

\end{document}